\newcommand{\lap}{\Delta}
\newcommand{\be}{\begin{equation}}
\newcommand{\ee}{\end{equation}}
\newcommand{\bee}{\begin{equation*}}
\newcommand{\eee}{\end{equation*}}
\newcommand{\bea}{\begin{eqnarray}}
\newcommand{\eea}{\end{eqnarray}}
\newcommand{\bess}{\begin{eqnarray*}}
\newcommand{\eess}{\end{eqnarray*}}
\numberwithin{equation}{section}
\theoremstyle{plain}
\newtheorem{thm}{Theorem}[section]
\newtheorem{cor}[thm]{Corollary}
\newtheorem{lem}[thm]{Lemma}
\theoremstyle{definition}
\newtheorem{rmk}[thm]{Remark}
\theoremstyle{remark}
\newcommand{\pt}{\partial}
\newcommand{\dd}{\displaystyle}
\begin{document}
\title[Beddington-DeAngelis predator-prey model with fear effect]{Dynamics and pattern formation \\ in a diffusive Beddington-DeAngelis \\ predator-prey model with fear effect}

\author[A. Z. Myint]{Aung Zaw Myint}
\address[Aung Zaw Myint]{Department of Mathematics, Uinversity of Mandalay, Mandalay 05032, Myanmar}
\email{\href{mailto:mdy.aungzawmyint@gmail.com}{mdy.aungzawmyint@gmail.com}}

\author[A. C. May]{Aye Chan May}
\address[Aye Chan May]{School of Integrated Science and Innovation,  Sirindhorn International Institute of Technology,  Thammasat University, Thailand}
\email{\href{mailto:d6622300199@g.siit.tu.ac.th}{d6622300199@g.siit.tu.ac.th}}

\author[M. H. Lwin]{Mya Hnin Lwin}
\address[Mya Hnin Lwin]{School of Integrated Science and Innovation,  Sirindhorn International Institute of Technology,  Thammasat University, Thailand}
\email{\href{mailto:m6622040241@g.siit.tu.ac.th}{m6622040241@g.siit.tu.ac.th}}

\author[T. T. Shwe]{Toe Toe Shwe}
\address[Toe Toe Shwe]{School of Integrated Science and Innovation,  Sirindhorn International Institute of Technology,  Thammasat University, Thailand}
\email{\href{mailto:m6522040820@g.siit.tu.ac.th}{m6522040820@g.siit.tu.ac.th}}

\author[A. Seesanea]{Adisak Seesanea}
\address[{Corresponding Author}: Adisak Seesanea]{School of Integrated Science and Innovation,  Sirindhorn International Institute of Technology,  Thammasat University, Thailand}
\email{\href{mailto:adisak.see@siit.tu.ac.th}{adisak.see@siit.tu.ac.th}}
\begin{abstract}
In this paper, dynamical properties and positive steady states of a diffusive predator-prey system with fear effect and Beddington-DeAngelis functional response subject to Neumann boundary conditions are investigated. Dynamical properties of time-dependent solutions and the stationary patterns induced by diffusion (Turing patterns) are presented.
\end{abstract}
\subjclass[2020]{Primary 35A01, 35B45; Secondary 35B09, 92D25.} 
\keywords{Predator-prey model; Fear factor; Beddington-DeAngelis functional response; Stationary patterns.
}
\maketitle
\section{Introduction}\label{Sect:Intro}
In ecology and evolutionary biology, the exploration of predator-prey systems is a central topic. Mathematical modeling is a powerful tool for investigating the aforementioned biological processes, which is why different types of models have been developed and studied \cite{M}. Different types of mathematical models are developed to investigate different biological processes such as interactions between species, cyclic dominance, diseases, dispersion, inducible defense, Allee effects, pattern formation, environmental fluctuations, etc. \cite{MAZ,MLW1,MW}. Predation events are relatively easy to observe in the field and remove individuals from the population. For example, mule deer reduce foraging time due to the risk of predation by mountain lions \cite{ALLB}; moose reproductive physiology and demographics change due to wolves predation risk \cite{CCLW}. Therefore, due to fear of predation risk, the prey population may move their grazing area to a safer location and sacrifice their higher feeding rate territories, increase their vigilance, adjust their reproductive strategies, etc. An interesting question is: what mechanisms are behind the spatial heterogeneity of species in a homogeneous environment? Generally, the movement or dispersal of a species and its interactions with other species may lead to pattern formation, and predator-prey type is such an interaction.  Mathematical analysis shows that the predator-prey system with diffusion will exhibit complex characteristics. If both prey and predators move randomly in their habitat, only prey-dependent functional responses, including Holling types I, II, and III, cannot produce spatially heterogeneous distributions. In these systems, the density-dependent predator mortality or Alli effect of prey growth plays an important role in determining spatial patterns. On the other hand, competition between predators itself may contribute to pattern formation in the predator-prey system, including ratio-dependent functional responses, Beddington-DeAngelis functional responses, and generalizations \cite{MAZ,PSW1,MSN}. The dynamical properties mainly include that diffusion coefficients could lead to spatially non-homogeneous bifurcating periodic solutions or Turing instability.

The impact of predators on the prey population may be direct, indirect, or both. The case of direct effects, such as predators killing prey, is studied in \cite{T}. Meanwhile, in the case of indirect effects, predators induced fear in the prey and change the prey's behavior is considered in \cite{LD}.  
The fear effect (an indirect effect) is a manifestation of sustained psychological stress of the prey, as prey species are always worried about possible attack.  A recent emerging view is that indirect effects (fear effect) on the prey population are even more powerful than direct \cite{WZZ}. In fear of predation, prey animals can change grazing zones to a safer place and sacrifice their highest intake rate areas \cite{WH}. This type of short-term survival strategy decreases long-term fitness, and as a result, the potential for reproduction reduces.

Recently, Wang et al. \cite{WZZ} introduced a model formulation such that the prey obeys a logistic growth in the absence of predation and the cost of fear. 

The logistic growth of prey can be separated into three parts: a birth rate, a natural death rate, and a density-dependent death rate due to intra-species competition. Since field experiments show that the fear effect will reduce production, the authors modified their prey equation by multiplying the production term by a factor that accounts for the cost of anti-predator defense due to fear. Some researchers have considered spatial spread predator-prey systems with fear and predator-taxis, see (\cite{CLJ,WZ}) and references cited therein for more details.

In this paper, we discuss the impact of the fear effect, modeled by $f(k,v)=\frac{1}{1+kv}$ in a predator-prey model with Beddington-DeAngelis functional response $g(u)=\frac{p}{1+qu+v}$, which generalizes the Holling type II functional response by acounting for predator interference. From a biological point of view, the Beddington-DeAngelis functional response captures realistic predator interference and handling time, while the fear function reflects how the predation risk can significantly alter prey behavior and physiology even in the absence of actual predation events.

Our results extend the analysis of \cite{JWY} by incorporating fear effects into the predator–prey model with Beddington–DeAngelis response. Similar to their findings, we demonstrate the existence of spatially nonconstant steady states arising from diffusion-driven instability, and our numerical simulations validate the bifurcation behavior predicted by the theory.

The paper is organized as follows: In Section 2, we present the mathematical model with its assumptions.  In Section 3, we discuss the equilibrium, global existence, uniqueness, and boundedness of \eqref{2.2}. In Section 4, we show some non-existence and existence results and a priori estimates of non-constant positive solutions of \eqref{2.3}.  Finally, we accomplish the bifurcation of non-constant positive solutions of \eqref{2.3} in Section 5.
\section{The mathematical model}\label{Sect:MathModel}
Wang et al. \cite{WZZ} proposed a predator-prey model with both linear and the Holling type II functional response by incorporating fear of the predator on prey, where the cost of fear plays a crucial role in the growth of prey:
 \be\label{2.1}
\begin{cases}
	u_t=r_{0}f(k,v)u-du-au^2-g(u)v,\qquad&t>0,\\[1mm]
	v_t=v(-m+cg(u)),\qquad&t>0,
 \end{cases}
 \ee
where $r_0f(k,v)$ is the birth rate of the prey, which depends on the density of the predator $v$, $d$ is the natural death rate of the prey, $a$ represents the death rate due to intra-species competition, $c$ is the conversion rate of the prey's biomass to predator's biomass, $m$ is the death rate of the predator, $g:\mathbb{R}_+\to\mathbb{R}_+$ is the functional response of predators to prey densities, $f(k,v)$ is a fear factor, which accounts for the cost of anti-predator defense due to fear, where $k$ is the fear parameter. By the biological meaning of $k, v$ and $f(k,v)$, it is reasonable to assume that
\bess
\begin{cases}
	f(0,v)=1,\quad f(k,0)=1,\quad \lim \limits_{k\to\infty}f(k,v)=0,\quad \lim \limits_{v\to\infty}f(k,v)=0,\\[1mm]
	\frac{\partial f(k,v)}{\partial k}<0,\quad \frac{\partial f(k,v)}{\partial v}<0.
\end{cases}
\eess
Wang et al. \cite{WZZ} theoretical results suggest that fear could stabilize the predator-prey system by eliminating the existence of periodic solutions.

In this article, we study the dynamical properties and positive steady states of an indirect effect of a diffusive predator-prey system with Beddington-DeAngelis functional response subject to Neumann boundary conditions. We consider the following diffusive predator-prey model with fear effect $f(k,v)=\frac{1}{1+kv}$, which refers to the cost of anti-predator response (\cite{WZZ}), and Beddington-DeAngelis functional response $g(u)=\frac{p}{1+qu+v}$ under homogeneous Neumann boundary conditions:
\be\label{2.2}
\begin{cases}
	u_t-d_1\Delta u=\dd\frac{ru}{1+kv}-du-au^2-\dd\frac{puv}{1+qu+v},\qquad& x\in\Omega,t>0,\\[3mm]
	v_t-d_2\Delta v=v\left(-m+\dd\frac{cpu}{1+qu+v}\right),\qquad&x\in \Omega,t>0,\\[3mm]
	\dd\frac{\pt u}{\pt\nu}=\dd\frac{\pt v}{\pt\nu}=0,\qquad&x\in\pt\Omega, \ t> 0,\\[1mm]
	u(x,0)=u_0(x)\ge 0,\quad v(x,0)=v_0(x)\ge 0,\qquad&x\in\Omega,\\[1mm]
\end{cases}
\ee
where $u$ and $v$ are represents the population of prey and predator; respectively, $\Omega$ is a bounded open domain in $\mathbb{R}^N,N \in\mathbb{N}^+$, the boundary $\pt\Omega$ is smooth, the parameters $a,c,d,p,q,s,m$ and $r$ are all positive and $d_1,d_2>0$ are diffusion coefficients of prey and predator, respectively, and $\nu$ is the unit outward normal vector at $\pt\Omega$. The initial values $u_0(x)$ and $v_0(x)$ are nonnegative smooth functions that are not identically zero.

The corresponding stationary problem of \eqref{2.2} is
\be\label{2.3}
	\begin{cases}
	-d_1\Delta u=\dd\frac{ru}{1+kv}-du-au^2-\dd\frac{puv}{1+qu+v},\qquad&x\in\Omega,\\[3mm]
	-d_2\Delta v=v\left(-m+\dd\frac{cpu}{1+qu+v}\right),\qquad & x\in\Omega, \\[3mm]
	\dd\frac{\pt u}{\pt\nu}=\dd\frac{\pt v}{\pt\nu}=0,\qquad&x\in\pt\Omega.
	\end{cases}\ee
In the next section, we will derive sufficient conditions so that the system \eqref{2.3} has a unique positive constant equilibrium solution $(\tilde{u},\tilde{v})$.
 Let us now define the following constants:
 \[
 \begin{split}
& \lambda=\dd\frac{m}{cp-mq}, \qquad \alpha_1=a\lambda k, \qquad \alpha_2=-\dd\frac{pk}{1+q\lambda}-\big(a\lambda(1+2k)+kd\big), \\ 
 &\alpha_3=r-\dd\frac{p}{1+q\lambda}-(a\lambda+d)(1+k) \qquad \text{and} \qquad
		\alpha_4=r-(d+a\lambda).
		\end{split}
 \]
\section{Basic dynamical properties of \eqref{2.2}}\label{Sect:Dynamic}
In this section, we explore some dynamical properties of the system \eqref{2.2}.
It is easy to see that \eqref{2.2} has a trivial constant equilibrium solution $(0, 0)$ and a semi-trivial constant equilibrium solution $(\frac{r-d}{a},0)$ provided $r>d$. In what follows, we give sufficient conditions 
on parameters for the existence and uniqueness of positive constant equilibrium solutions to the stationary problem \eqref{2.3}. Subsequently, we prove the existence, uniqueness, and longtime behaviors of global solutions of \eqref{2.2}.

\begin{lem}\label{lem:1}
If $cp>mq$ and $r>d+a\lambda$, there exists a unique positive constant equilibrium solution $(\tilde{u},\tilde{v})$ with $\tilde{u}=\lambda \tilde{v} + \lambda$  in the system  \eqref{2.3}.
\end{lem}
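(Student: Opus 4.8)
The plan is to observe that, since any constant function has vanishing Laplacian, a positive constant equilibrium of \eqref{2.3} is precisely a positive solution $(u,v)$ of the algebraic system obtained by discarding the diffusion terms. First I would exploit the second equation: for $v>0$ it forces $-m+\dd\frac{cpu}{1+qu+v}=0$, i.e.\ $cpu=m(1+qu+v)$, which rearranges to $u(cp-mq)=m(1+v)$. Under the hypothesis $cp>mq$ this yields the affine relation $u=\lambda(1+v)=\lambda v+\lambda$ with $\lambda=\dd\frac{m}{cp-mq}>0$, establishing $\tilde u=\lambda\tilde v+\lambda$ and reducing the whole problem to a single equation in $v$.

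Next I would substitute $u=\lambda(1+v)$ into the first equation. The key algebraic simplification is that $1+qu+v=1+q\lambda(1+v)+v=(1+q\lambda)(1+v)$, so the Beddington--DeAngelis term collapses nicely, $\dd\frac{puv}{1+qu+v}=\dd\frac{p\lambda(1+v)v}{(1+q\lambda)(1+v)}=\dd\frac{p\lambda v}{1+q\lambda}$. Dividing the first equation by $u=\lambda(1+v)>0$ then reduces the existence question to finding a positive root of
\be\label{hdef}
h(v):=\dd\frac{r}{1+kv}-d-a\lambda(1+v)-\dd\frac{pv}{(1+q\lambda)(1+v)}.
\ee
Clearing denominators recasts $h(v)=0$ as a cubic in $v$ whose coefficients coincide, up to sign, with the constants $\alpha_1,\dots,\alpha_4$ defined above (which is presumably their purpose); I nevertheless prefer to argue directly with $h$, since this sidesteps all sign bookkeeping and any appeal to Descartes' rule.

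For existence I would invoke the intermediate value theorem on $[0,\infty)$. The function $h$ is continuous there, and $h(0)=r-(d+a\lambda)=\alpha_4>0$ precisely by the hypothesis $r>d+a\lambda$; meanwhile, as $v\to\infty$ the term $-a\lambda v$ dominates (the remaining terms staying bounded), so $h(v)\to-\infty$. Hence $h$ admits at least one root $\tilde v\in(0,\infty)$, and then $\tilde u=\lambda(1+\tilde v)>0$ automatically, producing a genuine positive constant equilibrium.

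For uniqueness I would differentiate \eqref{hdef},
\be\label{hprime}
h'(v)=-\dd\frac{rk}{(1+kv)^2}-a\lambda-\dd\frac{p}{(1+q\lambda)(1+v)^2}.
\ee
Since all parameters are positive, every term on the right is strictly negative, so $h'(v)<0$ for all $v\ge 0$; thus $h$ is strictly decreasing and its positive root is unique. I expect the only genuinely delicate point to be the algebraic reduction itself --- in particular spotting the factorization $1+qu+v=(1+q\lambda)(1+v)$ that renders $h$ manifestly monotone --- after which existence and uniqueness follow cleanly from the sign of $h(0)$ and of $h'$, with the two hypotheses $cp>mq$ and $r>d+a\lambda$ entering exactly to guarantee $\lambda>0$ and $h(0)>0$, respectively.
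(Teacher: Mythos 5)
Your proof is correct, but it takes a genuinely different route from the paper's. The paper clears denominators to obtain the cubic $\mathcal{F}(v)=\alpha_1v^3-\alpha_2v^2-\alpha_3v-\alpha_4$, gets existence from $\mathcal{F}(0)=-\alpha_4<0$ together with $\mathcal{F}(v)\to\infty$, and then proves uniqueness by a two-case analysis on the sign of $\alpha_3$ (Rolle's theorem when $\alpha_3\le 0$; the discriminant and root location of $\mathcal{F}'$ when $\alpha_3>0$). You instead keep the equation in rational form and exploit the factorization $1+qu+v=(1+q\lambda)(1+v)$ along the line $u=\lambda(1+v)$, which collapses the Beddington--DeAngelis term to $\frac{pv}{(1+q\lambda)(1+v)}$ and makes $h'(v)<0$ obvious term by term; strict monotonicity of $h$ then yields uniqueness with no case analysis at all. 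Your details check out: $\frac{d}{dv}\bigl[\frac{pv}{(1+q\lambda)(1+v)}\bigr]=\frac{p}{(1+q\lambda)(1+v)^2}>0$, the hypothesis $r>d+a\lambda$ enters exactly through $h(0)>0$, and $h(v)\to-\infty$ is forced by the $-a\lambda v$ term while everything else stays bounded. The opposite signs relative to the paper ($h(0)>0$ and $h\to-\infty$ versus $\mathcal{F}(0)<0$ and $\mathcal{F}\to+\infty$) are just the negative factor absorbed when clearing denominators, so the two formulations are consistent. Your argument is shorter and more robust, since it never needs the sign bookkeeping for $\alpha_2$ and $\alpha_3$; what the paper's polynomial formulation buys is the explicit constants $\alpha_1,\dots,\alpha_4$ introduced in Section~2 and the alternative one-line justification via Descartes' rule of signs mentioned in the remark following the lemma.
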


\begin{proof}
Observe that a pair $(\tilde{u},\tilde{v})$ is a positive constant equilibrium solution of \eqref{2.3} whenever it satisfies
	\be\label{2.6}
	\begin{cases}
		\dd\frac{r}{1+kv}-d-au-\dd\frac{pv}{1+qu+v}=0, \\[3mm]
		-m+\dd\frac{cpu}{1+qu+v}=0. \\[1mm]
	\end{cases}
	\ee
The second equation in \eqref{2.6} gives $u=\lambda v+\lambda$.
Substituting this into the first equation of \eqref{2.6} yields a polynomial
	\be\label{2.8}
	\mathcal{F}(v)=\alpha_1v^3-\alpha_2v^2-\alpha_3v-\alpha_4.
	\ee
Notice that $\alpha_{1}, \alpha_{4}> 0$ and $\alpha_{2} < 0$ by the hypotheses.	
	For the existence of a positive root of $\mathcal{F}(v),$ note that 
	$\mathcal{F}(0) = - \alpha_{4} < 0$ and 
	$\lim\limits_{v \to \infty} \mathcal{F}(v)= \infty$. 
In particular, there exists $N \in \mathbb{N}$ at $\mathcal{F}(N) > 0$. Appealing to the {\it Intermediate Value Theorem}, there exists $0 < \tilde{v} < N$ such that $\mathcal{F}(\tilde{v}) = 0$. We finish the proof by establishing the uniqueness of $\tilde{v}$.

\begin{itemize}
\item {\sc Case 1:}	 $\alpha_{3} \leq 0.$ Suppose there exists another $\tilde{\tilde{v}} > 0$ such that 
$
\mathcal{F}(\tilde{v}) =0= \mathcal{F}(\tilde{\tilde{v}}).
$
By {\it Rolle's Theorem}, there is  $c \in (\tilde{v}, \tilde{\tilde{v}})$ so that 
	\be
	\mathcal{H}(v):=\mathcal{F}'(v)=3\alpha_1v^2-2\alpha_2v-\alpha_3.
	\ee
vanishes at $c$. Hence $\alpha_3 = 3\alpha_1c^2-2\alpha_2c > 0,$ which gives a contradiction.

\item {\sc Case 2:} $\alpha_{3} > 0$. The discriminant 
$
\Delta_\mathcal{H} = 4\alpha_{2}^2+12\alpha_1\alpha_3 > 0.
$
So $\mathcal{H}(v)$ has two real roots 
\[
v_{-} = \frac{2 \alpha_{2} - \sqrt{\Delta_\mathcal{H}}}{6 \alpha_{1}} < 0  
\qquad
\text{and}
\qquad
v_{+} = \frac{2 \alpha_{2} + \sqrt{\Delta_\mathcal{H}}}{6 \alpha_{1}} > 0.
\]
We see that $\mathcal{H}(v)  < 0 $ for $v \in (0, v_{+})$. On the other hand, 
$\mathcal{H}(v) > 0 $ for $v \in (v_{+}, \infty)$. Therefore, $\tilde{v}$ must be 
the only positive root of $\mathcal{F}(v)$.
\end{itemize}
This shows that $(\lambda \tilde{v} + \lambda, \tilde{v})$ is the unique positive constant  equilibrium solution of \eqref{2.3}.
\end{proof}

\begin{rmk}
One can easily show that the polynomial in \eqref{2.8} has a unique
positive root by using the {\it Descartes' Rule of Signs}, see e.g. \cite{Cur}.
\end{rmk}

\begin{thm}\label{thm1}
The system \eqref{2.2} has  a unique positive global solution $(u,v)$ with $u(x,t),v(x,t) > 0$ for $(x,t)\in\overline{\Omega}\times(0,\infty)$. Moreover, $(u,v)$ satisfies the following:
\begin{itemize}
	\item[$(i)$] If $r\le d$, then 
	$\displaystyle \lim_{t\to\infty}\max_{\overline{\Omega}}u(x,t)=\lim_{t\to\infty}\max_{\overline{\Omega}}v(x,t)=0$. 
	\item[$(ii)$] If $r>d$, then
\[ \limsup_{t\to\infty}\max_{\overline{\Omega}}u(x,t)\leq\chi \qquad \text{and} \qquad  \limsup_{t\to\infty}\int_{\Omega}v(x,t)dx\leq\dd\frac{c(r+m)\chi}{m}|\Omega|.
\]
Here $\chi=(r-d)/a$. Moreover, for any $d^{\ast}_{2} > 0$, there exists a positive constant $C$ independent of $u_0,v_0,d_1$ and $d_{2}$ such that 
\[
	\limsup_{t\to\infty}\max_{\overline{\Omega}}v(x,t)\leq C \qquad \text{whenever} \;\;\;  d^{\ast}_{2} \leq d_{2}.
\]

 If in particular, $d_2 \geq d_1$, we have \,
$\displaystyle
\limsup_{t\to\infty}\max_{\overline{\Omega}}v(x,t)\leq\dd\frac{c(r+m)\chi}{m}.
$
\end{itemize}
\end{thm}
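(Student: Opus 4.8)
The plan is to first establish local well-posedness, promote the solution to a global one via a priori $L^\infty$ bounds, and then read off the asymptotic estimates from scalar comparison arguments. The reaction terms are smooth and locally Lipschitz in $(u,v)$ on the closed first quadrant, since the denominators $1+kv$ and $1+qu+v$ stay bounded below by $1$ for $u,v\ge 0$; hence standard quasilinear parabolic theory (Amann's theorem, diagonal diffusion) yields a unique maximal classical solution on some interval $[0,T_{\max})$. Positivity follows from the quasi-positive structure: writing the equations as $u_t-d_1\Delta u=u\,G_1(u,v)$ and $v_t-d_2\Delta v=v\,G_2(u,v)$ with $G_1,G_2$ bounded on bounded sets, the constant $0$ is a subsolution for each component, so the maximum principle gives $u,v\ge 0$, and the strong maximum principle together with $u_0,v_0\not\equiv 0$ upgrades this to $u,v>0$ for $t>0$.

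Next I would bound the prey. Since $\tfrac{r}{1+kv}\le r$ and the predation term is nonnegative, the first equation gives $u_t-d_1\Delta u\le u\,(r-d-au)$. Comparing with the spatially homogeneous logistic ODE $\bar u'=\bar u(r-d-a\bar u)$, $\bar u(0)=\|u_0\|_{\infty}$, the maximum principle yields $u(x,t)\le\bar u(t)$: when $r\le d$ this forces $\bar u(t)\to 0$, and when $r>d$ it gives $u\le\max\{\|u_0\|_{\infty},\chi\}$ on $[0,T_{\max})$ together with $\limsup_{t\to\infty}\max_{\overline\Omega}u\le\chi$. For the predator I would use the elementary bound $\tfrac{cpuv}{1+qu+v}\le cpu$, so the second equation yields $v_t-d_2\Delta v\le -mv+cpu$; comparison with $\bar v'=-m\bar v+cp\,\bar u$ produces a uniform $L^\infty$ bound for $v$ on $[0,T_{\max})$. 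These a priori bounds rule out finite-time blow-up, so $T_{\max}=\infty$ and the solution is global. Part $(i)$ then follows immediately: for $r\le d$ we have $u\to 0$ uniformly, the forcing $cpu\to 0$, and the comparison ODE for $v$ forces $v\to 0$. The uniform-in-$d_2$ predator bound of part $(ii)$ drops out of the same comparison, since for large $t$ the estimate $u\le\chi+\varepsilon$ gives $\limsup_{t\to\infty}\max_{\overline\Omega}v\le cp\chi/m$, a constant independent of $u_0,v_0,d_1,d_2$; alternatively one may invoke $L^1$-to-$L^\infty$ smoothing of the Neumann heat semigroup to obtain a bound depending only on a lower bound $d_2^{\ast}$ of $d_2$.

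For the mass estimate I would test the system against the weight $(c,1)$: setting $W(t)=\int_\Omega(cu+v)\,dx$ and using $\int_\Omega\Delta u=\int_\Omega\Delta v=0$, the interference terms cancel and one obtains $W'+mW=c\int_\Omega\big(\tfrac{ru}{1+kv}-du-au^2+mu\big)\le c(r+m)\int_\Omega u$. Inserting $u\le\chi+\varepsilon$ for large $t$ and applying Gronwall gives $\limsup_{t\to\infty}W\le\frac{c(r+m)\chi}{m}|\Omega|$, and since $\int_\Omega v\le W$ this is exactly the claimed $L^1$ bound on $v$. For the sharp sup bound under $d_2\ge d_1$ I would work with the combined variable $w=cu+v$: when $d_1=d_2=:D$ the same cancellation gives a genuine scalar inequality $w_t-D\Delta w=c\big(\tfrac{ru}{1+kv}-du-au^2+mu\big)-mw\le c(r+m)(\chi+\varepsilon)-mw$, and comparison with $\bar w'=c(r+m)(\chi+\varepsilon)-m\bar w$ yields $\limsup_{t\to\infty}\max_{\overline\Omega}v\le\limsup_{t\to\infty}\max_{\overline\Omega}w\le\frac{c(r+m)\chi}{m}$.

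The main obstacle is precisely this last step when $d_2>d_1$ strictly: then $w=cu+v$ no longer solves a single-diffusion equation, and the combined variable picks up the sign-indefinite term $c(d_1-d_2)\Delta u$, so the clean comparison breaks down; note also that the direct comparison above only delivers the constant $cp\chi/m$, which is sharper than $\frac{c(r+m)\chi}{m}$ when $p\le r+m$ but weaker when $p>r+m$, so a genuinely new argument is needed in the latter regime. I expect to close this either by a semigroup comparison—exploiting that $e^{t d_2\Delta}$ is at least as smoothing and contractive as $e^{t d_1\Delta}$ on $L^\infty$ under Neumann conditions—or by bootstrapping the already-established $L^1$ bound on $v$ through the mild formulation using $d_2\ge d_2^{\ast}$, and then recovering the sharp constant in the regime $d_2\ge d_1$. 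The remaining pieces (prey bound, mass bound, positivity, global existence) are routine once the comparison functions are in place.
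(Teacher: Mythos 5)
Your overall architecture (logistic comparison for $u$, integration of $cu+v$ for the $L^1$ bound on $v$, ODE comparison for part $(i)$) matches the paper's, though you reach global existence through local well-posedness plus a priori bounds rather than the paper's coupled upper--lower solutions for the mixed quasi-monotone system; both routes are standard and both work here. Where you genuinely diverge is the uniform $L^\infty$ bound on $v$: your observation that $\frac{cpuv}{1+qu+v}\le cpu$ (since $1+qu+v\ge v$) turns the predator equation into the linear inequality $v_t-d_2\Delta v\le -mv+cpu$, and combined with $\limsup_t\max_{\overline\Omega}u\le\chi$ this yields $\limsup_t\max_{\overline\Omega}v\le cp\chi/m$ by a pure ODE comparison. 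This is simpler and strictly stronger than the paper's treatment of that clause, which routes through the $L^1$ estimate, Alikakos's $L^1$-to-$L^\infty$ iteration and a lemma of Cantrell--Cosner--Hutson to produce a constant $C$ valid only for $d_2\ge d_2^\ast$; your constant is explicit and independent of $d_1,d_2,u_0,v_0$ with no restriction on $d_2$ at all. That part of your argument I would keep.

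The genuine gap is the one you flag yourself: the final clause $\limsup_t\max_{\overline\Omega}v\le\frac{c(r+m)\chi}{m}$ for $d_2\ge d_1$. Your direct bound $cp\chi/m$ only implies it when $p\le r+m$, and your two proposed repairs (semigroup comparison, $L^1$-to-$L^\infty$ bootstrap) are not developed and would not obviously produce that specific constant. You should be aware, however, that the paper's own proof of this step is not more careful than yours: it sets $w=cu+v$ and asserts $w_t-d_2\Delta w\le c\bigl(\frac{ru}{1+kv}-du-au^2\bigr)-mv$, which silently discards the term $c(d_1-d_2)\Delta u$; this term vanishes only when $d_1=d_2$ and is not sign-definite otherwise, which is exactly the obstruction you identified. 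So your proposal proves everything in the statement except this last constant, and the regime $d_1<d_2$, $p>r+m$ remains open under both your argument and, strictly speaking, the paper's. If you want to close it honestly you would need either to restrict to $d_1=d_2$ (where your scalar comparison for $w$ is rigorous) or to supply a genuinely different argument for distinct diffusivities.
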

\begin{proof}	
Define
	\[A(u,v)=\dd\frac{ru}{1+kv}-du-au^2-\dd\frac{puv}{1+qu+v}
	\quad \text{and} \quad
	 B(u,v)=-mv+ \dd\frac{cpuv}{1+qu+v}.\]
Then for $u,v \geq 0$, we have
	\[
	A_{v}(u,v) 
	= \frac{- ruk}{(1+kv)^{2}} - \frac{pu(1+qu)}{(1+qu+v)^{2}} 
	\leq 0
	\quad \text{and} \quad
        B_{u}(u,v) 
        = \frac{cpv(1+v)}{(1+qu+v)^{2}}	\geq 0.
	\]
So the pair $(A,B)$ is a mixed quasi-monotone system (see \cite{YLWW}) for $u,v \geq 0$. Clearly, $(\underline{u}(x,t),\underline{v}(x,t))=(0,0)$ is a lower-solution of \eqref{2.2}.
Let us now find an upper-solution $(\overline{u}(x,t),\overline{v}(x,t))=(u^*(t),v^*(t))$ of 
\eqref{2.2}.  Setting 
$\displaystyle u^*=\max_{x\in\overline{\Omega}}u_0(x) > 0$
and  $\displaystyle v^*=\max_{x\in\overline{\Omega}}v_0(x) > 0$.
By {\it Method of Separation of Variables}, one can easily deduce the unique
positive solution $(u^*(t),v^*(t))$ of the system
	\begin{equation}\label{eq:v=0}
	\begin{cases}
	u_t=u(r-d-au),& t>0,\\[1mm]
	v_t=v(-m+ cpu),& t>0,\\[2mm]
	u(0)=u^*,v(0)=v^* .
	\end{cases}
	\end{equation}
Therefore, by {\it Comparison Principle},  $(\underline{u}(x,t),\underline{v}(x,t))=(0,0)$ and $(\overline{u}(x,t),\overline{v}(x,t))=(u^*(t),v^*(t))$ are coupled lower and upper solutions of \eqref{2.2}. Hence, in view of the {\it Upper and Lower Solutions Method}, \eqref{2.2} has a unique global solution $(u,v)$ with $0 \leq u \leq u^{*}$ and $0 \leq v \leq v^{*}$. Moreover, the {\it Strong Maximum Principle} implies $u(x,t),v(x,t)>0$ for $(x,t)\in\overline{\Omega}\times(0,\infty)$.
	
Notice that $\displaystyle \max_{\overline{\Omega}}  u(x,t) \leq u^{*}(t)$ for all $t >0$. Since
$u^{*}(t)$ satisfies first equation in \eqref{eq:v=0} with $u(0) = u^{\ast}$, then
\begin{equation}\label{ustar}
u^{*}(t) = 
\begin{cases}
\frac{u^{*}}{1+atu^{*}} \qquad \qquad \text{for} \;\; r=d, \\[0.4em]
 \frac{Ke^{(r-d)t}}{1+ \tilde{K} e^{(r-d)t}} \qquad\;\;  \text{for} \;\; r \neq d,
\end{cases}
\end{equation}
where $K= \frac{(r-d)u^{\ast}}{r-d-au^{\ast}}$ and $\tilde{K} = \frac{aK}{r-d}$. Hence, 
whenever  $r \leq d$, $u^{*}(t) \rightarrow 0$ as $t \rightarrow \infty$, and thus  $\displaystyle \lim_{t\to\infty}\max_{\overline{\Omega}}u(x,t)=0$ for all $t>0$, 
see also \cite[Lemma 5.14 $(ii)$]{W2}.

For $r \leq d$, since $\lim\limits_{t \rightarrow \infty}u^{*} (t)= 0$, we have $u^{*} (t) \leq \frac{m}{2cp}$ for large $t$. In view of the second equation in \eqref{eq:v=0}, 
for large $t$, we estimate
$ \displaystyle
{v_{t}^{*}} \leq -mv^{*} + \frac{m v^{*}}{2} = \frac{-m}{2}v^{*}. 
$
Therefore, $v^{*}(t) \leq c e^{\frac{-m t}{2}}$ which tends to $0$ as $t \rightarrow \infty$. This implies that 
$\displaystyle
\lim\limits_{t \rightarrow \infty}\max_{\overline{\Omega}}  v(x,t)= 0
$
because $\displaystyle \max_{\overline{\Omega}}  v(x,t) \leq v^{*}(t)$ for all $t>0$. 
This proves $(i)$.

Let us now assume $r>d$. In view of \eqref{ustar}, $u^{*}(t) \rightarrow \frac{K}{\tilde{K}} = \frac{r-d}{a} =  \chi$ as $t \rightarrow \infty$. Consequently,
 $\dd\limsup_{t\to\infty}\max_{\overline{\Omega}}u(x,t) \leq \chi$, see \cite[Lemma 5.14 $(i)$]{W2}. For the estimate of $v(x,t)$, let  $U(t) =\int_{\Omega}u(x,t)dx$ and 
 $V(t) = \int_{\Omega}v(x,t)dx$, then
	\begin{eqnarray*}
		\dd\frac{dU}{dt}&=&\int_{\Omega}d_1\Delta udx+\int_{\Omega}\Big(\dd\frac{ru}{1+kv}-du-au^2-\dd\frac{puv}{1+qu+v}\Big)dx,\\[2mm]
		\dd\frac{dV}{dt}&=&\int_{\Omega}d_2\Delta vdx-mV+\int_{\Omega}\dd\frac{cpuv}{1+qu+v}dx,
	\end{eqnarray*}
	and hence
	\begin{eqnarray}
	(cU+V)_t&=&-mV+c\int_{\Omega}\Big(\dd\frac{r}{1+kv}-d-au\Big)udx\nonumber\\[1mm]
	&=&-m(cU+V)+cmU+c\int_{\Omega}\Big(\dd\frac{r}{1+kv}-d-au\Big)udx\nonumber\\[1mm]
	&\leq&-m(cU+V)+c(r+m)U.\nonumber
	\end{eqnarray}
	As $\dd\limsup_{t\to\infty}\max_{\overline{\Omega}}u(x,t)\leq\chi$, we have $\dd\limsup_{t\to\infty} U(t)\leq\chi|\Omega|$. Fix $\varepsilon>0$, there exists $T_\varepsilon>0$ such that
	\begin{equation}\label{Ch4-6}
	(cU+V)_t\leq-m(cU+V)+c(r+m)(\chi+\varepsilon)|\Omega|,\ \quad t>T_\varepsilon.	 
	\end{equation}
Therefore, by variation-of-constants formula and change of variables, we have
	\begin{equation}\label{Ch4-7}
	\begin{split}
	\int_{\Omega}v(x,t)dx 
	&= V(t)< cU(t)+V(t) \\
	&\leq e^{-mt}(cU(0)+V(0))+
	\dd\frac{\chi+\varepsilon}{m}(r+m)c|\Omega|(1-e^{-mt}),\ \ t>T_\varepsilon.
	\end{split}
	\end{equation}
Consequently, $\limsup\limits_{t\to\infty}\int_{\Omega}v(x,t)dx\leq\frac{(\chi + \varepsilon) (r+m)c|\Omega|}{m}$. Since $\varepsilon > 0$ was arbitray, \eqref{Ch4-7} implies 
\begin{equation}\label{vub}
\limsup\limits_{t\to\infty}\int_{\Omega}v(x,t)dx\leq\frac{\chi(r+m)c|\Omega|}{m}.
\end{equation}
Employing the $L^{1}$-norm estimate \eqref{vub} together with \cite[Theorem 3.1]{A}  and \cite[Lemma 4.7]{CCH}, given $d^{\ast}_{2} > 0$, we find a postive constant $C$ independent of
$u_{0} ,v_{0}, d_{1}$ and $d_{2}$ such  that
\[
\limsup_{t\to\infty} \max_{\overline{\Omega}}v(x,t) \leq C  \qquad \text{for} \quad d^{\ast}_{2} \leq d_{2},
\] 
see also the proof of Theorem 2.2 $(i)$ in \cite{MLW1}.	 

	Now we consider the special case $d_2 \geq d_1$. Let $w=cu+v$. Then
	\begin{eqnarray*}
		\begin{cases}
			w_t-d_2\lap w \leq c\Big(\dd\frac{ru}{1+kv}-du-au^2\Big)-mv,\ &x\in\Omega, \ t>0,\\[1mm]
			\dd\frac{\pt w}{\pt\nu}=0,&x\in\pt\Omega, \ t>0,\\[1mm]
			w(0,x)= cu_0(x)+v_0(x),&x\in\overline{\Omega}.
		\end{cases}
	\end{eqnarray*}
	Note that
	\[c\Big(\dd\frac{ru}{1+kv}-du-au^2\Big)-mv=c\Big(\dd\frac{ru}{1+k v}-du-au^2\Big)+mcu-mw \leq c(r+m)u-mw\]
	and $\dd\limsup_{t\to\infty}\max_{\overline{\Omega}}u(x,t)\leq\chi$. For any given $\varepsilon>0$, there exists $T_\varepsilon\gg 1$ such that $u<\chi+\varepsilon$ for all $(x,t)\in\overline\Omega\times[T_\varepsilon,\infty)$. Therefore
	\[\begin{cases}
	w_t-d_2\lap w\leq c(r+m)(\chi+\varepsilon)-mw,\ &x\in\Omega, \ t>T_\varepsilon,\\[1mm]
	\dd\frac{\pt w}{\pt\nu}=0,&x\in\pt\Omega, \ t>T_\varepsilon,\\
	w(T_{\varepsilon},x)=cu(T_{\varepsilon},x)+v(T_{\varepsilon},x),&x \in \overline{\Omega}.
	\end{cases}\]
Appealing to  \cite[Lemma 5.14 $(i)$]{W2} again, together with arbitrariness of $\varepsilon>0$, we deduce
	\[
	\limsup_{t\to \infty}\max_{\overline\Omega}v(x,t)\leq\limsup_{t\to\infty}\max_{\overline\Omega}w(x,t)\leq\frac{c(r+m)\chi}{m}.\]
The proof of part $(ii)$ is now complete.
\end{proof}
\section{Stationary patterns--nonconstant positive solutions of \eqref{2.3}}\label{Sect:StationaryPatterns}

In predator-prey models, the interest is whether the various species can coexist. If species are homogeneously distributed, this would be indicated by a constant positive solution. In spatially heterogeneous cases, the existence of non-constant time-independent positive solutions, also called stationary patterns.  

In this section, we study the non-existence and existence of nonconstant positive solutions to the problem \eqref{2.3}.

\subsection{A priori estimates and nonexistence of nonconstant positive solutions to \eqref{2.3}}
To discuss the nonexistence and existence of nonconstant positive solutions of problem \eqref{2.3}, we shall give a priori estimates for positive solutions. To do so, we recall useful lemmas by Lou and Ni \cite{LN}, and Peng, Shi and Wang \cite {PSW}.

\begin{lem}[Maximum Principle]\label{lem4.1}
Suppose that $w\in C^2(\Omega)\cap C^1(\overline{\Omega})$ and $g\in C(\overline{\Omega}\times \mathbb{R})$ satisfy
\[\begin{cases}
	\Delta w(x)+g(x,w(x))\geq 0\quad &\mbox{in}\ \Omega,\\[1mm]
	\dd\frac{\pt w(x)}{\pt\nu}\leq 0\quad &\mbox{on}\ \pt\Omega,
	\end{cases}\]
	then $g(x,w(x_0))\geq 0$, where $w(x_0)=\displaystyle\max_{\overline \Omega}w(x)$.
\end{lem}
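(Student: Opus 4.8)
The plan is to split the argument according to where the maximum $w(x_0)=\max_{\overline{\Omega}}w$ is attained, since the interior and boundary cases call for genuinely different tools. Throughout I would argue by contradiction, assuming $g(x_0,w(x_0))<0$ and deriving an impossibility; the target conclusion is the standard Lou--Ni statement $g(x_0,w(x_0))\ge 0$ with $x_0$ a maximizer.

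First I would dispose of the interior case. If $x_0\in\Omega$, then $w$ has an interior maximum at $x_0$, so its Hessian is negative semidefinite and in particular $\Delta w(x_0)\le 0$. Substituting this into the hypothesis $\Delta w(x_0)+g(x_0,w(x_0))\ge 0$ immediately yields $g(x_0,w(x_0))\ge -\Delta w(x_0)\ge 0$, which is exactly what we want. No boundary information is used in this case.

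The substantive case is $x_0\in\partial\Omega$, and this is where the main obstacle lies. Suppose for contradiction that $g(x_0,w(x_0))<0$. Since $g$ is continuous on $\overline{\Omega}\times\mathbb{R}$ and $w\in C^1(\overline{\Omega})$, the composition $x\mapsto g(x,w(x))$ is continuous, so there is a relatively open neighborhood $U$ of $x_0$ in $\overline{\Omega}$ on which $g(x,w(x))<0$. On $U\cap\Omega$ the differential inequality then forces $\Delta w\ge -g(x,w)>0$, that is, $w$ is strictly subharmonic there. By the strong maximum principle a strictly subharmonic function cannot attain its maximum at an interior point, so $w(x)<w(x_0)$ for every $x\in U\cap\Omega$. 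Using the smoothness of $\partial\Omega$, I would place an interior ball $B\subset U\cap\Omega$ tangent to $\partial\Omega$ at $x_0$; on $B$ we have $\Delta w>0$ and $w(x)<w(x_0)$, while $x_0\in\partial B$ is the contact point at which the maximum $w(x_0)$ is attained. The Hopf boundary point lemma then yields $\frac{\partial w}{\partial\nu}(x_0)>0$ for the outward normal derivative, contradicting the hypothesis $\frac{\partial w}{\partial\nu}\le 0$ on $\partial\Omega$.

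The delicate point is precisely this boundary case: one must upgrade the weak inequality $w\le w(x_0)$ to the strict interior inequality $w<w(x_0)$ demanded by Hopf's lemma. This is exactly why the \emph{strict} subharmonicity extracted from $g(x_0,w(x_0))<0$ (rather than merely $\Delta w\ge 0$) is essential, and why the smoothness of $\partial\Omega$ must be invoked to guarantee an interior tangent ball. Once these two ingredients are in place, Hopf's lemma closes the contradiction and establishes $g(x_0,w(x_0))\ge 0$ in all cases.
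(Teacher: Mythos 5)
The paper does not actually prove this lemma: it is recalled verbatim from Lou and Ni \cite{LN} and used as a black box, so there is no in-paper argument to compare against. Your proof is correct and is essentially the standard proof of that result: the interior case is the second-derivative test, and the boundary case combines strict subharmonicity near $x_0$ with an interior tangent ball and the Hopf boundary point lemma. Two small remarks. First, in the boundary case you do not need to invoke the strong maximum principle to upgrade $w\le w(x_0)$ to $w<w(x_0)$ on $U\cap\Omega$: if $w(x_1)=w(x_0)$ for some $x_1\in U\cap\Omega$, then $x_1$ is an interior global maximizer, so $\Delta w(x_1)\le 0$, contradicting $\Delta w>0$ on $U\cap\Omega$; this is just your interior case again and keeps the argument elementary. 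Second, when applying Hopf's lemma you should record that the outward normal of the tangent ball $B$ at $x_0$ coincides with $\nu$, the outward normal of $\Omega$, and that the normal derivative at $x_0$ exists because $w\in C^1(\overline{\Omega})$ -- only then does the conclusion $\frac{\pt w}{\pt\nu}(x_0)>0$ genuinely contradict the hypothesis $\frac{\pt w}{\pt\nu}\le 0$ on $\pt\Omega$. With those points made explicit, the argument is complete.
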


\begin{lem}[Harnack Inequality]\label{Harnack}
Suppose that $w\in C^2(\Omega)\cap C^1(\overline{\Omega})$ be a positive solution to $\Delta w(x)+c(x)w(x)=0$, where $c\in C(\overline{\Omega})$, satisfying the homogeneous Neumann boundary condition. Then, there exists a positive constant $C_{*}$ which depends only on $||c||_{\infty}$ such that 
\[
\max\limits_{\overline{\Omega}} w(x)\leq C_{*}\min\limits_{\overline{\Omega}}  w(x).
\]
\end{lem}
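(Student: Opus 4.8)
The plan is to reduce the Neumann problem to an interior Harnack inequality for a uniformly elliptic equation and then propagate the estimate across all of $\overline{\Omega}$ by a covering-and-chaining argument. First I would note that since $\overline{\Omega}$ is compact and $c\in C(\overline{\Omega})$, the quantity $M:=\|c\|_{\infty}$ is finite, so $\Delta w+c(x)w=0$ is a uniformly elliptic equation with no first-order terms and a bounded zeroth-order coefficient. Regarding the positive solution $w$ as a nonnegative solution, the classical interior Harnack inequality from De Giorgi--Nash--Moser theory yields, for every ball with $B_{2R}(x_0)\subset\Omega$, a constant $C_0=C_0(N,M,R)$ such that $\sup_{B_R(x_0)}w\le C_0\inf_{B_R(x_0)}w$.

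The main obstacle is the boundary, where the interior theory does not apply directly; this is where the homogeneous Neumann condition enters through an even-reflection argument. Using the smoothness of $\partial\Omega$, I would straighten the boundary near a point $x_0\in\partial\Omega$ by a local diffeomorphism, converting $\Delta w+c(x)w=0$ into a uniformly elliptic equation $\sum_{i,j}a_{ij}(y)\partial_{ij}\tilde{w}+\sum_i b_i(y)\partial_i\tilde{w}+\tilde{c}(y)\tilde{w}=0$ on a half-ball, with a conormal boundary condition on the flat face inherited from the Neumann condition. Even reflection across the flat face then produces a function solving a uniformly elliptic equation on the full ball, whose coefficients remain controlled in terms of $M$ and the fixed geometry of $\partial\Omega$. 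Applying the interior Harnack inequality to this reflected solution gives a boundary Harnack estimate, again with a constant depending only on $N$, $M$, and $\Omega$.

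Finally I would cover the compact set $\overline{\Omega}$ by finitely many balls---interior balls where the first step applies and boundary half-balls where the second step applies---arranged so that consecutive balls overlap. Since $\Omega$ is connected, bounded, and has smooth boundary, the points $x_{\max}$ and $x_{\min}$ at which $w$ attains its maximum and minimum over $\overline{\Omega}$ can be joined by a chain of finitely many overlapping balls, and the length of this chain is bounded by the fixed geometry of $\Omega$ alone. Iterating the local Harnack inequalities along the chain and multiplying the resulting constants yields a single constant $C_{*}$ that, with $N$ and $\Omega$ held fixed, depends only on $\|c\|_{\infty}$, and we conclude $\max_{\overline{\Omega}}w\le C_{*}\min_{\overline{\Omega}}w$. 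The only points requiring genuine care are verifying that the reflection preserves uniform ellipticity with coefficient bounds depending only on $M$ and $\Omega$, and that the number of balls in the chain stays uniformly bounded; both follow from the smoothness and compactness of $\overline{\Omega}$. Alternatively, one may simply invoke the statements in \cite{LN} and \cite{PSW}, where this inequality is established in exactly this form.
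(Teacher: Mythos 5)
Your proposal is correct and is the standard argument (interior Harnack, boundary reflection under the Neumann condition, then chaining over a finite cover), with the constant depending on $\|c\|_{\infty}$ once $N$ and $\Omega$ are fixed. The paper itself offers no proof of this lemma---it simply recalls it from \cite{LN} and \cite{PSW}---so your closing remark that one may invoke those references is exactly what the paper does, and your sketch correctly fills in the argument behind those citations.
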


\begin{thm}\label{thm4.1}
	If $r>d$ and $(u,v)$ is a nonnegative solution of the stationary problem
	\eqref{2.3}, then either $(u ,v)$ is one of constants: $(0,0), (\frac{r-d}{a},0)$,  or $(u,v)$ satisfies
	\begin{eqnarray*}
		0 < u(x)\leq\chi \qquad \text{and} \qquad
		0 <  v(x)\leq\Big(\dd\frac{cr}{m}+\dd\frac{d_1}{d_2}c\Big)\chi \qquad \text{for} \quad x\in\overline\Omega,
	\end{eqnarray*}
where $\chi=\frac{r-d}{a}$.
\end{thm}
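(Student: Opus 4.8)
The plan is to dispose of the degenerate solutions by a strong-maximum-principle dichotomy and then to prove the two pointwise bounds with the Maximum Principle of Lemma~\ref{lem4.1}, the estimate on $v$ resting on a diffusion-weighted combination of the two equations that annihilates the Beddington--DeAngelis interaction term. First I would bound $u$. Writing the first equation of \eqref{2.3} as
\[
\Delta u+\frac{1}{d_1}\Big(\frac{ru}{1+kv}-du-au^2-\frac{puv}{1+qu+v}\Big)=0,\qquad \frac{\partial u}{\partial\nu}=0,
\]
Lemma~\ref{lem4.1} applied at a point $x_1$ with $u(x_1)=\max_{\overline\Omega}u$ gives $\frac{ru(x_1)}{1+kv(x_1)}-du(x_1)-au(x_1)^2-\frac{pu(x_1)v(x_1)}{1+qu(x_1)+v(x_1)}\ge 0$. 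If $u\equiv 0$ this is vacuous; otherwise $u(x_1)>0$, and dividing by $u(x_1)$, discarding the nonnegative competition and predation terms and using $\frac{r}{1+kv}\le r$ yields $au(x_1)\le r-d$, i.e. $u(x)\le\chi$ on $\overline\Omega$.

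Next I would sort out positivity and the constant solutions. Since the nonnegative $v$ solves $\Delta v+c_2(x)v=0$ with $c_2=\frac{1}{d_2}\big(-m+\frac{cpu}{1+qu+v}\big)$ bounded and continuous, the strong maximum principle forces either $v\equiv 0$ or $v>0$ on $\overline\Omega$, and likewise for $u$. If $v\equiv 0$, the first equation reduces to the diffusive logistic problem $-d_1\Delta u=u(r-d-au)$; here Lemma~\ref{lem4.1}, used both at the maximum of $u$ and (through $u\mapsto -u$) at its minimum, forces $\max_{\overline\Omega}u\le\chi\le\min_{\overline\Omega}u$, whence $u\equiv\chi$ unless $u\equiv 0$. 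This yields precisely the two constant solutions $(0,0)$ and $(\chi,0)$. In the remaining case $v>0$ one cannot have $u\equiv 0$, since integrating the second equation under the Neumann condition would then give $m\int_\Omega v\,dx=0$; hence $u>0$ as well, and it remains only to bound $v$ from above.

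The crux is the estimate on $v$. Here I would set $\psi:=cd_1u+d_2v$; multiplying the first equation of \eqref{2.3} by $c$ and adding the second makes the terms $\mp\frac{cpuv}{1+qu+v}$ cancel, leaving
\[
-\Delta\psi=\frac{cru}{1+kv}-cdu-cau^2-mv.
\]
Applying Lemma~\ref{lem4.1} at a maximum point $x_0$ of $\psi$ gives $mv(x_0)\le\frac{cru(x_0)}{1+kv(x_0)}\le cr\chi$, so $v(x_0)\le cr\chi/m$. Finally, for every $x$ one has $d_2v(x)\le\psi(x)\le\psi(x_0)=cd_1u(x_0)+d_2v(x_0)\le cd_1\chi+d_2\,cr\chi/m$, which on dividing by $d_2$ is exactly $v(x)\le\big(\frac{cr}{m}+\frac{d_1}{d_2}c\big)\chi$.

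The only genuinely non-routine step is recognizing this diffusion-weighted combination: the weights $cd_1$ and $d_2$ are dictated precisely so that the interaction term disappears, and they are what produce the ratio $d_1/d_2$ in the stated bound. Everything else reduces to one-line applications of the maximum principle. I would also note that the Harnack inequality of Lemma~\ref{Harnack} plays no role in these upper bounds; it is reserved for the lower estimates needed in the subsequent nonexistence argument.
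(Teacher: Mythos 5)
Your proof is correct and follows essentially the same route as the paper: Lemma~\ref{lem4.1} applied to the $u$-equation for the bound $u\le\chi$, and the same diffusion-weighted combination $cd_1u+d_2v$ with Lemma~\ref{lem4.1} for the bound on $v$ (you evaluate $v$ at the maximum point of $\psi$ directly, the paper rewrites $-mv$ in terms of $w$ first, but the resulting bound is identical). Your treatment of the dichotomy with the constant solutions $(0,0)$ and $(\chi,0)$ is in fact slightly more complete than the paper's, which leaves the case $v\equiv 0$, $u$ nonconstant implicit.
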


\begin{proof}
Notice that  $(0,0)$ and $(\chi,0)$ are constant solutions of \eqref{2.3}.
If $(u,v)$ is neither $(0,0)$ nor $(\chi,0)$, then $v$ satisfies
\[
-d_2\Delta v+mv=\frac{cpuv}{1+qu+v}\ge 0 \qquad \text{in} \;\; \Omega.
\]
By {\it Strong Maximum Principle} \cite[Theorem 9.6]{GT}, if there exists $x_0\in\overline\Omega$ such that $v(x_0)=0$, then $v(x)\equiv 0$, which 
is not the case. So $v(x)>0$ for $x\in\overline\Omega$. Arguing in a similar way, $u(x)>0$ for $x\in\overline\Omega$ since $u$ satisfies
\[
d_1\Delta u + (r-d)u \geq 0\qquad \text{in} \;\; \Omega.
\]	

Taking $v =0$ in the first equation of \eqref{2.3} yields
\[
\Delta u +  g(x, u(x)) = 0 \qquad \text{in} \;\; \Omega,
\]
where $g(x, u(x)) = \frac{(r-d-au)u}{d_{1}}$. By Lemma \ref{lem4.1} implies that
$g(x, u^{\ast}) \geq 0$ where $u^{\ast} = \displaystyle\max_{\overline \Omega}u(x)$.
Therefore
\[
u(x) \leq u^{\ast} \leq \frac{r-d}{a} = \chi \qquad \text{for all} \;\; x \in \overline \Omega.
\]
On the other hand, multiplying the first equation by $c$ and adding with the second equation in \eqref{2.3}, we obtain
\begin{align*}
	-(cd_1\Delta u+d_2\Delta v)&=\Big(c \dd\frac{ru}{1+kv}-dcu-acu^2\Big)-mv\\[1mm]
	&\leq cru-mv\\[1mm]
	&=cru-\dd\frac{m}{d_2}(d_2v+cd_1u-cd_1u)\\[1mm]
	&=cru+\dd\frac{m}{d_2}cd_1u-\dd\frac{m}{d_2}(d_2v+cd_1u)\\[1mm]
	&\leq\big(cr+\dd\frac{m}{d_2}cd_1\big)\chi-\dd\frac{m}{d_2}(d_2v+cd_1u).
\end{align*}
Therefore
\[
\Delta w + h(x, w(x)) \geq 0  \qquad \text{in} \;\; \Omega,
\]
where $w = cd_{1}u + d_{2}v$ and $h(x, w(x)) = \displaystyle -\frac{m w}{d_{2}} + \big(cr+\dd\frac{m}{d_2}cd_1\big)\chi$. 
In view of Lemma \ref{lem4.1},
$h(x, w^{\ast}) \geq 0$ with $w^{\ast} = \displaystyle\max_{\overline \Omega}w(x)$.
Hence
\[
v(x) \leq \frac{w^{\ast}}{d_{2}} \leq \Big(\dd\frac{cr}{m}+\dd\frac{c d_1}{d_2}\Big)\chi \qquad \text{for all} \;\; x \in \overline \Omega.
\]
This proves Theorem \ref{thm4.1}.
\end{proof}

\begin{cor}\label{cor1}
If $r>d$ and $m\geq \frac{cp(r-d)}{a}$, then the only nonnegative constant solutions of \eqref{2.3} are $(0,0)$ and $(\frac{r-d}{a},0)$.
\end{cor}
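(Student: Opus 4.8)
The plan is to argue by contradiction, ruling out any positive constant equilibrium and invoking Theorem \ref{thm4.1} to dispatch the remaining cases. First I would observe that a nonnegative \emph{constant} solution of \eqref{2.3} is in particular a nonnegative solution of the stationary problem (the Laplacian terms simply vanish), so Theorem \ref{thm4.1} applies verbatim: any such solution is either $(0,0)$, or $(\chi,0)$ with $\chi=(r-d)/a$, or else satisfies the positivity and bound $0<u\le\chi$ together with $v>0$. It therefore suffices to show that this third alternative is incompatible with the hypothesis $m\ge cp(r-d)/a=cp\chi$.

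Next, for a constant solution with $u,v>0$, the second equation of \eqref{2.3} forces $-m+\frac{cpu}{1+qu+v}=0$, that is, $m=\frac{cpu}{1+qu+v}$. Because $v>0$ and $qu\ge 0$, the denominator satisfies $1+qu+v>1$, whence $m=\frac{cpu}{1+qu+v}<cpu$. Combining this strict inequality with the a priori bound $u\le\chi$ supplied by Theorem \ref{thm4.1} yields the chain $m<cpu\le cp\chi=\frac{cp(r-d)}{a}\le m$, where the last inequality is precisely the assumption of the corollary. This gives $m<m$, a contradiction, so no positive constant equilibrium exists.

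Concluding, the only nonnegative constant solutions are $(0,0)$ and $(\chi,0)$, as claimed. I expect no serious obstacle: the argument is essentially a one-line estimate once Theorem \ref{thm4.1} furnishes the bound $u\le\chi$. The single point deserving care is the \emph{strictness} of $m<cpu$ — one must note that $v>0$ makes the denominator strictly larger than $1$ — since it is exactly this strictness, paired with the non-strict hypothesis $m\ge cp\chi$, that produces the contradiction. It is also worth verifying that Theorem \ref{thm4.1} indeed guarantees $v>0$ (rather than merely $v\ge 0$) in the nontrivial case, which it does through the strong maximum principle.
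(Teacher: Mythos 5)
Your proposal is correct and follows essentially the same route as the paper: the paper likewise notes that for a constant solution with $v>0$ the second equation forces $-m+\frac{cpu}{1+qu+v}=0$, which is incompatible with $m\ge \frac{cp(r-d)}{a}$ and the bound $u\le\frac{r-d}{a}$ (obtained, as you do, from Theorem \ref{thm4.1}), and then reads off $u=0$ or $u=\chi$ from the first equation when $v=0$. Your explicit remark on where the strictness $m<cpu$ comes from merely fills in a step the paper leaves implicit.
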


\begin{proof}
Suppose that $(u,v)$ is a nonnegative constant solution of \eqref{2.3}. Then
\begin{equation} \label{Mya}
 v\big(-m+\frac{cpu}{1+qu+v}\big)=0
\end{equation}
If $v>0,$ \eqref{Mya} gives $-m+\frac{cpu}{1+qu+v}=0$, which is impossible since $m\geq \frac{cp(r-d)}{a}$ and $u\leq \frac{(r-d)}{a}$. 
Hence $v=0$. In view of first equation of \eqref{2.3}, $u=0$ or $u=\frac{r-d}{a}$.
\end{proof}

We finish this subsection by establishing a sufficient condition on
parameters $r$ and $d$, along with the diffusion coefficients $d_1$ and $d_2$, for 
the nonexistence of nonnegative steady-state solutions of \eqref{2.2}, using similar arguments presented in \cite[Theorems 3.4]{ZF}. We shall write $\Gamma$ instead of the collection constants $(a, c,d,m,p ,r)$ for convenience.

\begin{cor}\label{lowerbound}
Let $r>d$ and $\chi=\frac{r-d}{a}$. For any $d^{*} > 0$, there exists a positive constant $\underline{\mu}$ depending on $d^{*}$ and $\Gamma$
such that every nonnegative nonconstant solution $(u,v)$ of \eqref{2.3} satisfies
\[
\underline{\mu} \leq u(x)\leq \chi\qquad\text{and}\qquad 0 < v(x)\leq \frac{c(r+m)\chi}{m}\qquad
\text{for} \;\;  x\in\overline{\Omega},
\]
whenever $d^{*} \leq d_{1} \leq d_{2}$. 
\end{cor}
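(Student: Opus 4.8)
The plan is to establish the upper bounds first and then concentrate on the uniform lower bound $\underline{\mu}$, which is the genuine content of the corollary. Since $d^{*} \leq d_{1} \leq d_{2}$ forces $d_{2} \geq d_{1}$, I would begin by invoking Theorem \ref{thm4.1} to obtain $0 < u(x) \leq \chi$ and $0 < v(x) \leq \left(\frac{cr}{m} + \frac{d_{1}}{d_{2}}c\right)\chi$ on $\overline{\Omega}$. Because $d_{1} \leq d_{2}$ gives $\frac{d_{1}}{d_{2}} \leq 1$, the latter bound simplifies to $v(x) \leq \left(\frac{cr}{m} + c\right)\chi = \frac{c(r+m)\chi}{m}$, which is exactly the stated upper bound for $v$, independent of the diffusion coefficients. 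This disposes of both upper estimates with essentially no new work.

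The substance is the lower bound on $u$. Here I would follow the Harnack-and-contradiction strategy typical of Lou--Ni-type arguments (compare \cite[Theorem 3.4]{ZF}). First I would rewrite the $u$-equation as a linear equation $\Delta u + c_{1}(x) u = 0$ with
\[
c_{1}(x) = \frac{1}{d_{1}}\left(\frac{r}{1+kv} - d - au - \frac{pv}{1+qu+v}\right).
\]
Using the already-established upper bounds on $u$ and $v$, together with $d_{1} \geq d^{*}$, the coefficient $c_{1}$ is bounded in $L^{\infty}$ by a constant depending only on $d^{*}$ and $\Gamma$ (note the bound on $v$ is itself independent of the diffusion coefficients, which is what makes the estimate uniform). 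Lemma \ref{Harnack} then yields a Harnack constant $C_{*}$, depending only on $\|c_{1}\|_{\infty}$ and hence only on $d^{*}$ and $\Gamma$, such that $\max_{\overline{\Omega}} u \leq C_{*} \min_{\overline{\Omega}} u$.

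To convert the Harnack inequality into a positive lower bound, I would argue by contradiction. Suppose no such $\underline{\mu}$ exists; then there is a sequence of parameters (with $d^{*} \leq d_{1,n} \leq d_{2,n}$) and nonconstant solutions $(u_{n}, v_{n})$ with $\min_{\overline{\Omega}} u_{n} \to 0$. By Harnack, $\max_{\overline{\Omega}} u_{n} \to 0$ as well, so $u_{n} \to 0$ uniformly. The key step is then to integrate the $u$-equation over $\Omega$: since $\int_{\Omega} \Delta u_{n}\,dx = 0$ by the Neumann condition, we get
\[
\int_{\Omega} u_{n}\left(\frac{r}{1+kv_{n}} - d - au_{n} - \frac{pv_{n}}{1+qu_{n}+v_{n}}\right) dx = 0.
\]
Dividing by $\int_{\Omega} u_{n}\,dx$ (valid since $u_{n} > 0$) and passing to the limit, the terms $au_{n}$ and $\frac{pv_{n}}{1+qu_{n}+v_{n}}$ require control: the first vanishes as $u_{n}\to 0$, while the fear and interference terms must be handled via the uniform bounds on $v_{n}$ and a compactness argument. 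The main obstacle I anticipate is precisely this limit: one must extract convergent subsequences for $v_{n}$ (using elliptic estimates and the uniform $L^{\infty}$ bound) so that the weighted average converges to something of the form $\frac{r}{1+k\bar{v}} - d - \frac{p\bar{v}}{1+\bar{v}}$, and then show this limiting expression is strictly positive under $r > d$, contradicting that the integral equals zero. Establishing strict positivity of the limit — rather than mere nonnegativity — is the delicate point, and it is where the hypothesis $r > d$ (which controls the leading fear-modified birth term against the death rate) must be used decisively.
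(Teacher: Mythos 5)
Your treatment of the two upper bounds is exactly the paper's: invoke Theorem \ref{thm4.1}, note $d_1/d_2\leq 1$, and simplify $\bigl(\frac{cr}{m}+\frac{d_1}{d_2}c\bigr)\chi$ to $\frac{c(r+m)\chi}{m}$. The Harnack step is also the right tool, set up as in the paper: the coefficient $c_1(x)$ is bounded in $L^\infty$ by a constant depending only on $d^*$ and $\Gamma$ because the $v$-bound is diffusion-independent, so $\max_{\overline\Omega}u\leq C_*\min_{\overline\Omega}u$ with $C_*=C_*(d^*,\Gamma)$.

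The gap is in how you produce a positive lower bound for $\max_{\overline\Omega}u$. You extract it from the \emph{prey} equation by a contradiction--compactness argument whose endpoint is that the limiting quantity $\frac{r}{1+k\bar v}-d-\frac{p\bar v}{1+\bar v}$ must be strictly positive under $r>d$. That is false in general: for $\bar v$ large the fear term $\frac{r}{1+k\bar v}$ is small while $\frac{p\bar v}{1+\bar v}$ approaches $p$, so the expression can be negative (e.g.\ whenever $p>r$). The hypothesis $r>d$ does not control the fear and interference terms, and you correctly flag this as the delicate point but do not close it — so the contradiction never materializes. The information you need lives in the \emph{predator} equation, which is what the paper uses: writing the $v$-equation as $\Delta v+\frac{v}{d_2}\bigl(-m+\frac{cpu}{1+qu+v}\bigr)=0$ and applying Lemma \ref{lem4.1} at a maximum point $x_2$ of $v$ gives $-m+\frac{cpu(x_2)}{1+qu(x_2)+v(x_2)}\geq 0$, hence $cpu(x_2)\geq m(1+qu(x_2)+v(x_2))>m$ and so $\max_{\overline\Omega}u>\frac{m}{cp}$ — a clean, quantitative bound with no limiting process. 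Combined with your Harnack inequality this yields the explicit constant $\underline{\mu}=\frac{m}{C_*cp}$, which moreover depends only on $d^*$ and $\Gamma$ as the statement requires, whereas your subsequence argument would produce $\underline{\mu}$ non-constructively even if it could be repaired. (If you insist on an integral route, the salvageable version integrates the $v$-equation: if $\max_{\overline\Omega}u<\frac{m}{2cp}$ then $\int_\Omega v\bigl(-m+\frac{cpu}{1+qu+v}\bigr)\,dx<0$ strictly since $v>0$, contradicting the Neumann identity — again it is the predator equation, not the prey equation, that forces prey abundance from below.)
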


\begin{proof}
Let $(u,v)$  be a nonnegative nonconstant solution of \eqref{2.3}. In view of Theorem \ref{thm4.1}, as $d_{1} \leq d_{2}$, we obtain
\[
0 < u(x)\leq \chi\qquad\text{and}\qquad 0 < v(x)\leq \frac{c(r+m)\chi}{m}\qquad
\text{for} \;\;  x\in\overline{\Omega}.
\]
Let us now deduce a uniform lower bound of $u$. To this end, we set 
\[
\displaystyle u(x_{1}) = \max_{\overline\Omega} u(x) \quad \text{and} \quad v(x_{2}) = \max_{\overline\Omega}v(x).
\]
Observe that $(u,v)$ satisfies
\begin{equation}\nonumber
\begin{cases}
\Delta v+ \dfrac{v}{d_2} \Big(-m + \dfrac{cpu}{1+qu+v} \Big)= 0 \qquad x \in \Omega, \\
\dfrac{\partial v}{\partial \nu} = 0 \qquad \qquad \qquad \qquad \qquad \qquad \quad \;\;   x \in \partial\Omega.
\end{cases}
\end{equation}
 Applying the Maximum Principle (Lemma \ref{lem4.1}) yields
  \[
 -m + \frac{cpu(x_2)}{1+qu(x_2)+v(x_2)} \geq 0.
 \]
Consequently, $\displaystyle \frac{m}{cp} < u(x_2) \leq u(x_{1}) = \max_{\overline\Omega} u(x)  $.
On the other hand, 
\begin{equation}\nonumber
\begin{cases}
\Delta u+c(x) u(x)=0\; &x\in\Omega,\\
\dfrac{\partial v}{\partial \nu} = 0  &x \in \partial\Omega,
\end{cases}
\end{equation}
where $c(x)=\frac{1}{d_{1}}\Big( \frac{r}{1+kv}-d-au-\frac{pv}{1+qu+v}\Big)\in C(\overline\Omega)$. Moreover, since $d^{*} \leq d_{1} \leq d_{2}$,
\[
||c(x)||_{\infty}\leq \frac{1}{d^*} \Big(r+d+a\chi+\frac{cp(r+m)\chi}{m}\Big).
\]
 By Harnack Inequality (Lemma \ref{Harnack}), there exists a positive constant $C^{*}$ which depends only on $d^*$ and $\Gamma$ such that 
$
\frac{m}{cp}<\max\limits_{\overline{\Omega}} u(x) \leq C^* \min\limits_{\overline{\Omega}} u(x).
$
Hence, taking $0<\underline{\mu}\leq \frac{m}{C^*cp}$, we get $ \min\limits_{\overline{\Omega}} u(x)>\underline{\mu}$. This completes the proof.
\end{proof}
\begin{thm}\label{thm3}
Assume $r>d$, then there exists a positive constant $d^*=d^*(\Lambda,\Omega)$  such that the problem \eqref{2.3} has no nonconstant nonnegative solution provided $d^{*}\leq d_1\leq d_2$.
\end{thm}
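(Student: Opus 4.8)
The plan is to use an energy (integral) method, exploiting that sufficiently large diffusion forces every steady state to be spatially homogeneous. Let $(u,v)$ be a nonnegative solution of \eqref{2.3} with $d^{*}\le d_1\le d_2$, and write $\bar u=\frac{1}{|\Omega|}\int_\Omega u\,dx$ and $\bar v=\frac{1}{|\Omega|}\int_\Omega v\,dx$ for the spatial averages. The goal is to show that once $d^{*}$ is taken large enough one necessarily has $u\equiv\bar u$ and $v\equiv\bar v$, so that $(u,v)$ is constant.

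First I would record uniform a priori bounds. Since $d_1\le d_2$, Theorem \ref{thm4.1} (equivalently Corollary \ref{lowerbound}) yields $0\le u(x)\le\chi$ and $0\le v(x)\le M:=\frac{c(r+m)\chi}{m}$ for all $x\in\overline\Omega$, with $M$ depending only on the model parameters. The crucial feature is that these bounds are \emph{independent} of $d_1,d_2$ throughout the regime $d_1\le d_2$, so that the reaction terms $f_1(u,v)=\frac{ru}{1+kv}-du-au^2-\frac{puv}{1+qu+v}$ and $f_2(u,v)=v\bigl(-m+\frac{cpu}{1+qu+v}\bigr)$ are always evaluated on the fixed compact box $R=[0,\chi]\times[0,M]$. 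Because the denominators $1+kv$ and $1+qu+v$ are bounded below by $1$ on $R$, both $f_1,f_2$ are $C^1$ there, hence Lipschitz: there exist constants $L_{ij}=L_{ij}(\Gamma)$ with $|f_1(u,v)-f_1(\bar u,\bar v)|\le L_{11}|u-\bar u|+L_{12}|v-\bar v|$ and $|f_2(u,v)-f_2(\bar u,\bar v)|\le L_{21}|u-\bar u|+L_{22}|v-\bar v|$.

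Next I would set up the energy identities. Multiplying the first equation of \eqref{2.3} by $u-\bar u$, integrating over $\Omega$, and discarding the boundary term via the Neumann condition gives $d_1\int_\Omega|\nabla u|^2\,dx=\int_\Omega f_1(u,v)(u-\bar u)\,dx$; since $\int_\Omega(u-\bar u)\,dx=0$, I may subtract the constant $f_1(\bar u,\bar v)$ and replace $f_1(u,v)$ by $f_1(u,v)-f_1(\bar u,\bar v)$ in the integrand, and similarly for $v$ with $d_2$. Applying the Lipschitz bounds together with Cauchy--Schwarz and Young's inequality, then adding the two identities, yields $d_1\|\nabla u\|_{L^2}^2+d_2\|\nabla v\|_{L^2}^2\le C_0\bigl(\|u-\bar u\|_{L^2}^2+\|v-\bar v\|_{L^2}^2\bigr)$ for a constant $C_0=C_0(\Gamma)$ assembled from the $L_{ij}$. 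Invoking the Poincar\'e--Wirtinger inequality $\mu_1\|w-\bar w\|_{L^2}^2\le\|\nabla w\|_{L^2}^2$, where $\mu_1=\mu_1(\Omega)>0$ is the first positive Neumann eigenvalue of $-\Delta$, and using $d_1,d_2\ge d^{*}$ on the left, this becomes $\mu_1 d^{*}\bigl(\|u-\bar u\|_{L^2}^2+\|v-\bar v\|_{L^2}^2\bigr)\le C_0\bigl(\|u-\bar u\|_{L^2}^2+\|v-\bar v\|_{L^2}^2\bigr)$. Choosing $d^{*}=d^{*}(\Gamma,\Omega)>C_0/\mu_1$ forces $\|u-\bar u\|_{L^2}=\|v-\bar v\|_{L^2}=0$, so $(u,v)\equiv(\bar u,\bar v)$ is constant, completing the argument.

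The main obstacle I anticipate is the bookkeeping required to ensure that $C_0$, and hence the threshold $d^{*}$, depends only on $\Gamma$ and $\Omega$ and not on the individual diffusion rates. This is precisely where the uniformity of the a priori bounds for $d_1\le d_2$ is indispensable: it confines the reaction terms to the fixed box $R$, so the Lipschitz constants $L_{ij}$ (and thus $C_0$) are fixed once and for all \emph{before} $d^{*}$ is selected, avoiding any circular dependence between $d^{*}$ and the estimates. A secondary, routine point is justifying the integrations by parts, which follows from the $C^2$ regularity of steady states under the smooth-boundary hypothesis on $\Omega$.
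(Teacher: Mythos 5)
Your proposal is correct and follows essentially the same route as the paper: compare the solution to its spatial averages $\bar u,\bar v$, derive an energy inequality from the weak formulation, and use the Poincar\'e--Wirtinger inequality to force $u\equiv\bar u$, $v\equiv\bar v$ once $d_1,d_2\ge d^{*}$ with $d^{*}$ large. The one genuine difference is your choice of test function for the prey equation: you multiply by $u-\bar u$ and control the right-hand side by the Lipschitz constants of the full reaction terms on the compact box $[0,\chi]\times[0,M]$, whereas the paper multiplies by $(u-\bar u)/u$, which produces the term $d_1\bar u\int_\Omega|\nabla(u-\bar u)|^2/u^2\,dx$ and therefore requires the uniform positive lower bound $\underline{\mu}\le u$ from Corollary \ref{lowerbound} (obtained via the Harnack inequality) to bound this from below by $(\underline{\mu}d_1/\chi^2)\int_\Omega|\nabla(u-\bar u)|^2\,dx$. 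Your version buys a small simplification: it needs only the upper bounds of Theorem \ref{thm4.1}, dispenses with the Harnack-based lower bound altogether, and thereby also sidesteps the mild bookkeeping issue in the paper that $\underline{\mu}$ itself depends on a previously fixed diffusion threshold while $d^{*}$ is then defined in terms of $\underline{\mu}$. The paper's multiplier, on the other hand, exploits the logistic structure (dividing out the factor $u$) so that only the per-capita growth rate appears on the right, which is the more traditional Lou--Ni style computation. Both yield a threshold $d^{*}$ depending only on the parameters and $\Omega$, so the proposal is acceptable as written.
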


\begin{proof}
Let $(u,v)$ be a positive solution to \eqref{2.3}. Setting 
\[\overline{u}=\dd\frac{1}{|\Omega|}\int_{\Omega}u(x)dx,\  \ \qquad \overline{v}=\dd\frac{1}{|\Omega|}\int_{\Omega}v(x)dx.\]
Then
\[
	\int_{\Omega}(u-\overline{u})dx=\int_{\Omega}(v-\overline{v})dx=0.
\]
Multiplying the first equation in \eqref{2.3} by $\frac{u-\overline{u}}{u}$, followed by integrating the result over $\Omega$ by parts, we obtain
\medskip
\begin{align*}
d_1 \overline{u} \int_{\Omega}\dd\frac{|\nabla(u-\overline{u})|^2}{u^2}dx & =\int_{\Omega}\Big(\dd\frac{r}{1+kv}-du-au^{2}-\dd\frac{puv}{1+qu+v}\Big)\frac{(u-\overline{u})}{u}dx\\[1mm]
&= \int_{\Omega}\Big(\dd\frac{r}{1+kv}-\dd\frac{r}{1+k\overline{v}}-au+a\overline{u}-\dd\frac{pv}{1+qu+v}+\dd\frac{p\overline{v}}{1+q\overline{u}+\overline{v}}\Big)(u-\overline{u})dx\\[1mm]
&=\int_{\Omega}\Big(\frac{rk}{(1+kv)(1+k\overline{v})}+\frac{p(1+q\overline{u})}{(1+qu+v)(1+q\overline{u}+\overline{v})} \Big) (\overline{v}-v)(u-\overline{u})dx\\
&\;\;\;+\int_{\Omega} \big( -a + \frac{pq\overline{v}}{(1+qu+v)(1+q\overline{u}+\overline{v})}  \Big)(u-\overline{u})^{2}dx.
\end{align*}
Employing the Young's inequality together with $\underline{\mu}<\overline{u}\leq\chi$ and $0<\overline{v}\leq \frac{c(r+m)\chi}{m}$ by Corollary \ref{lowerbound},  we find that
\begin{equation}\label{u(x)}
\begin{split}
\frac{\underline{\mu}d_1}{\chi^{2}}  \int_{\Omega}{|\nabla(u-\overline{u})|^2}dx &\leq d_1 \overline{u} \int_{\Omega}\dd\frac{|\nabla(u-\overline{u})|^2}{u^2}dx\\
& \leq C_{1}\int_{\Omega}(u-\overline{u})^{2}dx +C_{2}\int_{\Omega}(v-\overline{v})^{2}dx\\
\end{split}
\end{equation}
where
$C_{1}=\frac{rk+p(1+q\chi)}{2}+\frac{cpq(r+m)\chi}{m}-a$ and $C_{2}=\frac{rk+p(1+q\chi)}{2}$. 
Similarly, we multiply the second equation in \eqref{2.3} by $(v-\overline{v})$,  integrating over $\Omega$ by parts, followed by Young's inequality, we deduce
\begin{equation}\label{v(x)}
\begin{split}
\int_{\Omega}d_{2}|\nabla (v-\overline{v})|^{2}dx&=\int_{\Omega} \Big(-mv+\frac{cpuv}{1+qu+v}+m\overline{v}-\frac{cp\overline{u}\;\overline{v}}{1+q\overline{u}+\overline{v}}\Big)(v-\overline{v})dx\\
&\leq \frac{cp}{1+q\overline{u}+\overline{v}} \int_{\Omega}  (\overline{u}+qu\overline{u})(v-\overline{v})^{2} dx\\
&\;\;\;+  \frac{cp}{1+q\overline{u}+\overline{v}} \int_{\Omega}(\overline{v}v+v)(u-\overline{u})(v-\overline{v})dx\\
&\leq C_{3}\int_{\Omega}(u-\overline{u})^{2}dx+C_{4}\int_{\Omega}(v-\overline{v})^{2}dx
\end{split}
\end{equation}
where
$C_{3}=\frac{c^{2}p(r+m)\chi}{2m}(1+\frac{c(r+m)\chi}{m})$ and $C_{4}=cp\Big(\chi(1+q\chi)+\frac{c(r+m)\chi}{2m} \Big)(1+\frac{c(r+m)\chi}{m})$.
Therefore, adding up the estimates \eqref{u(x)} and \eqref{v(x)}, we have
\[
\frac{\underline{\mu}d_1}{\chi^{2}}  \int_{\Omega}{|\nabla(u-\overline{u})|^2}dx +  d_{2}\int_{\Omega}|\nabla(v-\overline{v})|^{2}dx \leq (C_{1}+C_{3})\int_{\Omega}(u-\overline{u})^{2}dx + (C_{2}+C_{4})\int_{\Omega}(v-\overline{v})^{2}dx.
\]
Then, applying the Poincar\'{e} inequality to each term of left side, we find that
\[
\frac{\underline{\mu} d_1}{\chi^{2}C_{p}} \int_{\Omega}(u-\overline{u})^{2}dx+ \frac{ d_{2}}{C_{p}} \int_{\Omega}(v-\overline{v})^{2}dx\leq (C_{1}+C_{3})\int_{\Omega}(u-\overline{u})^{2}dx + (C_{2}+C_{4})\int_{\Omega}(v-\overline{v})^{2}dx
\]
where $C_{p}$ is a positive constant in Poincar\'{e} inequality. Hence,
\begin{equation}\label{lastineq}
\Big(\frac{\underline{\mu} d_1}{\chi^{2}C_{p}}-(C_{1}+C_{3})\Big) \int_{\Omega}(u-\overline{u})^{2}dx+ \Big(\frac{ d_{2}}{C_{p}}-(C_{2}+C_{4})\Big) \int_{\Omega}(v-\overline{v})^{2}dx\leq 0.
\end{equation}
Taking $d^{*}(\Lambda,\Omega)=\max\{\frac{\chi^{2}C_{p}}{\underline{\mu}}(C_{1}+C_{3}),C_{p}(C_{2}+C_{4})\}$ yields $u= \overline{u}$ and $v=\overline{v}$ provided $d^*\leq d_1 \leq d_2$.
\end{proof}
The above Theorem \ref{thm3} showed that the nonexistence of nonnegative steady state solutions when the diffusion coefficients $d_1$ and $d_2$ are large.

As a consequence of Theorem \ref{thm3} and Corollary \ref{cor1}, we obtain the following nonexistence result, see \cite[Theorem 3.1]{MLW1} for a similar result in the case of a diffusive predator–prey model with Holling type II functional response.

\begin{cor}\label{cor3}
If $r>d$, $m\geq\frac{cp(r-d)}{a}$ and $d^*\leq d_1\leq d_2$ where $d^*$ is the constant defined in Theorem \ref{thm3}, then the only nonnegative solutions of \eqref{2.3} are $(0,0)$ and $(\frac{r-d}{a},0)$.
\end{cor}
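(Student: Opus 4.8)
The plan is to obtain this corollary immediately by combining the two preceding results, Theorem \ref{thm3} and Corollary \ref{cor1}, since the hypotheses imposed here are precisely the union of the hypotheses those two statements require. No new analytic machinery should be needed; the work has already been done.

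First I would invoke Theorem \ref{thm3}. Under the assumptions $r>d$ and $d^*\leq d_1\leq d_2$, with $d^*=d^*(\Lambda,\Omega)$ the constant produced there, that theorem guarantees that \eqref{2.3} admits no nonconstant nonnegative solution. Hence \emph{every} nonnegative solution $(u,v)$ of \eqref{2.3} must in fact be a constant solution. Next I would apply Corollary \ref{cor1}: since $r>d$ and $m\geq\frac{cp(r-d)}{a}$, the only nonnegative \emph{constant} solutions of \eqref{2.3} are $(0,0)$ and $\big(\frac{r-d}{a},0\big)$. Combining the two steps, any nonnegative solution is a constant and any nonnegative constant is one of these two pairs, which is exactly the asserted conclusion.

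The argument carries no genuine obstacle; the only point deserving attention is the bookkeeping of hypotheses. I would check that $r>d$ is common to both results, that $m\geq\frac{cp(r-d)}{a}$ feeds Corollary \ref{cor1}, and that $d^*\leq d_1\leq d_2$ feeds Theorem \ref{thm3} with the \emph{same} threshold $d^*$ appearing in the statement of the corollary. Since all three conditions are present in the hypotheses and the value of $d^*$ is inherited verbatim from Theorem \ref{thm3}, the two results apply simultaneously and the claim follows directly.
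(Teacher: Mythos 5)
Your proposal is correct and matches the paper's own (implicit) argument exactly: the paper states Corollary \ref{cor3} as an immediate consequence of Theorem \ref{thm3} (which excludes nonconstant nonnegative solutions when $d^*\leq d_1\leq d_2$) combined with Corollary \ref{cor1} (which classifies the nonnegative constant solutions). Your hypothesis bookkeeping is the same as what the paper relies on, so nothing is missing.
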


\subsection{Existence of non-constant positive solutions}
In this section, we provide sufficient conditions to investigate the existence of nonconstant positive steady states to \eqref{2.3} for a certain parameter range by using Leray–Schauder degree theory in \cite{N}. Biologically, this condition identifies precisely when homogeneous predator-prey distributions lose their stability, giving rise to nonconstant steady-state distributions. Such spectral conditions are widely studied and standard in reaction-diffusion theory and spatial ecology, see \cite{WZ}, \cite{PSW}, and cited there in.
We now investigate the existence of non-constant positive steady states to \eqref{2.3} for a certain parameter range by using Leray–Schauder degree theory in \cite{N}. In order to derive the existence of non-constant positive steady states, we present some known results regarding the existence of solutions, some concepts, and Lemma to apply the fixed point index theory. 
 
Let $0=\mu_0<\mu_1<\cdots$
 be the eigenvalues of the Laplace operator $-\Delta$ on $\Omega$ with homogeneous Neumann boundary condition, and denote by $E(\mu_i)$ the subspace generated by the eigenfunctions corresponding to $\mu_i$.  Let $m_i>1$ be the algebraic multiplicity of $\mu_i$, that is, $\{\phi_{ij}\}_{j=1}^{m_i}$ constitute a complete set of linearly independent eigenfunctions corresponding to $\mu_i$.  Define
 \be \label{4.3}
 \begin{split}
 \mathbf{X}_{ij}=\{c\phi_{ij}:c\in\mathbb{R}^2\}, \qquad \mathbf{X}_{i}=\bigoplus_{j=1}^{m_i}\mathbf{X}_{ij} \qquad \text{and} \\  
 \mathbf{X}=\{(u,v)\in[C^1(\overline{\Omega})]^2:\dd\frac{\pt u}{\pt\nu}=\dd\frac{\pt v}{\pt\nu}=0\ \mbox{on }\pt\Omega\}.
 \end{split}
 \ee
 Then $\mathbf{X}=\bigoplus_{i=0}^{\infty}\mathbf{X}_{i}$. The problem \eqref{2.3} can be rewritten as
\[\begin{cases}
-\Delta\Phi(\textbf{u})=G(\textbf{u}), \quad &x\in\Omega,\\[.1mm]
\dd\frac{\pt\textbf{u}}{\pt\nu}=0,&x\in\pt\Omega,
\end{cases}
\]
where $\textbf{u} = (u, v)$ and  $\Phi(\textbf{u})=(d_1u,d_2v)^T$, or equivalently, the operator equation
\be\label{4.12}
F(d_1,d_2;\mathbf{u})\equiv\mathbf{u}-(I-\Delta)^{-1} \{\Phi_{\textbf{u}}^{-1}G(\mathbf{u})+\mathbf{u}\}=0\qquad\mbox{on} \;\; \mathbf{X}^+,
\ee
where $\mathbf{X}^+ = \lbrace \mathbf{u} \in \mathbf{X}: \mathbf{u} > 0 \rbrace$ and $(I-\Delta)^{-1}$ is the inverse of $I-\Delta$ with homogeneous Neumann boundary conditions.  By a direct computation, we have
\[F_{\mathbf{u}}(d_1,d_2;\tilde{\mathbf{u}})=I-(I-\Delta)^{-1}\{\Phi_{\textbf{u}}^{-1}(\tilde{\mathbf{u}}) G_\mathbf{u}(\tilde{\mathbf{u}})+I\}.\]
If $F_{\mathbf{u}}(d_1,d_2;\tilde{\mathbf{u}})$ is invertible, $0$ is not an eigenvalue of $F_{\mathbf{u}}(d_1,d_2;\tilde{\mathbf{u}})$.  The {\it Leray-Schauder Theorem} (\cite[Theorem 2.8.1]{N}) implies that
$\mbox{\mbox{index}}(I - F(d_1,d_2;\tilde{\mathbf{u}}))=(-1)^\gamma,$
where $\gamma$ is the sum of the algebraic multiplicities of negative eigenvalue of $F_{\mathbf{u}}(d_1,d_2;\tilde{\mathbf{u}})$.

By a similar argument as in \cite{PSW, W}, $\beta$ is an eigenvalue of $F_{\mathbf{u}}(d_1,d_2;\tilde{\mathbf{u}})$ on $\mathbf{X}_i$ if and only if $\beta(1+\mu_i)$ is an eigenvalue of the matrix
\bess
A_i:=\mu_iI-\Phi_{\textbf{u}}^{-1}(\tilde{\mathbf{u}}) G_\mathbf{u}(\tilde{\mathbf{u}}) =
\begin{pmatrix}
	\mu_i-\dd\frac{M}{d_1}&\quad-\dd\frac{N}{d_1}\\[3.5mm]
	-\dd\frac{P}{d_2}&\quad\mu_i+\dd\frac{Q}{d_2}
\end{pmatrix},
\eess
where
\[\begin{array}{lcl}		M=\tilde{u}\Big(-2a+\dd\frac{pq\tilde{v}}{(1+q\tilde{u}+\tilde{v})^2}\Big)
+\dd\frac{r}{1+k\tilde{v}}-d-\dd\frac{p\tilde{v}}{1+q\tilde{u}+\tilde{v}},\\[4mm] N=\dd\frac{rk\tilde{u}}{(1+k\tilde{v})^2}+\dd\frac{p\tilde{u}(1+q\tilde{u})}{(1+q\tilde{u}+\tilde{v})^2},\\[4mm]
P=\dd\frac{cp(1+\tilde{v})\tilde{v}}{(1+q\tilde{u}+\tilde{v})^2},\\[4mm]
Q=m-\dd\frac{cp(1+q\tilde{u})\tilde{u}}{(1+q\tilde{u}+\tilde{v})^2}.
\end{array}\]
The direct computation yields
\bess
&&\mathtt{det}A_i=\dd\frac{1}{d_1d_2}\big[d_1d_2\mu_{i}^2+(Qd_1-Md_2)\mu_i +PN-MQ\big],\\[.5mm]
&&\mathtt{tr}A_i=2\mu_i+\dd\frac{Q}{d_2}-\dd\frac{M}{d_1}.
\eess
Define
\bess
H(d_1, d_2;\mu)=\big[d_1d_2\mu^2+(Qd_1-Md_2)\mu+PN-MQ\big].
\eess
Then $H(d_1,d_2;\mu_{i})=d_1d_2\ \mathtt{det}A_i$. If
\be\label{4.13}
|Md_2-Qd_1|>2\sqrt{d_1d_2(PN-MQ)} ,
\ee
then $H(d_1,d_2;\mu)=0$ has two real roots, namely,
\bess
\mu_{+}(d_1,d_2)=\dd\frac{Md_2-Qd_1+\sqrt{(Md_2-Qd_1)^2-4d_1d_2(PN-MQ)}}{2d_1 d_2},\\[1mm]
\mu_{-}(d_1,d_2)=\dd\frac{M d_2-Qd_1-\sqrt{(Md_2-Qd_1)^2-4d_1d_2(PN-MQ)}}{2d_1d_2}.
\eess
Here we remark that for any fixed $d_1>0$, \eqref{4.13} is valid for a large $d_2$.
Set
\bess
S_p =\{\mu_0,\mu_1,\mu_2,\cdots\}\qquad\text{and}\qquad\Lambda(d_1,d_2)=\Big\{\mu|\mu\geq 0,\mu_{-}(d_1,d_2)<\mu <\mu_{+}(d_1,d_2)\Big\}\label{defi:Lambda}.
\eess
 It is easy to see that
\be\label{4.14}
\lim_{d_2\to\infty}\mu_{+}(d_1,d_2)=\dd\frac{M}{d_1}\qquad\text{and}\qquad\lim_{d_2\to \infty}\mu_{-}(d_1,d_2)=0.
\ee
We also have that if $H(d_1,d_2;\mu_i)\neq 0$, then $H(d_1,d_2;\mu_i)<0$ if and only if the number of negative eigenvalues of $F_{\mathbf{u}}(d_1,d_2;\tilde{\mathbf{u}})$ in $\mathbf{X}_i$ is odd.

Now, the following lemma (Theorem 6.1.1 in \cite{W}) gives the explicit formula of calculating the index.

\begin{lem}\label{lem4.3}
	Suppose that $H_i(d_1,d_2;\mu_j)\neq 0$ for all $\mu_i\in S_p$. Then
	\[\mathrm{index}(F(d_1,d_2;.),\tilde{\mathbf{u}}_i)=(-1)^\gamma,\]
	where
	\bess
	\gamma=	
	\begin{cases}
		\dd\sum_{\mu_i\in A\cap S_p}m(\mu_i),&\mbox{\emph{if}}\  \Lambda \cap S_p\neq \emptyset,\\[1mm]
		0,&\mbox{\emph{if}}\  \Lambda \cap S_p=\emptyset.
	\end{cases}
	\eess
	In particular, if $H_i(d_1,d_2;\mu)>0$ for all $\mu\geq0$, then $\gamma=0.$
\end{lem}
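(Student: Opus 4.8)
The plan is to reduce the infinite-dimensional index computation to a finite, block-by-block eigenvalue count, using the $F_{\mathbf{u}}$-invariant decomposition $\mathbf{X}=\bigoplus_{i\ge 0}\mathbf{X}_i$ recorded in \eqref{4.3}. First I would observe that the hypothesis $H(d_1,d_2;\mu_i)\neq 0$ for every $\mu_i\in S_p$, together with the identity $H(d_1,d_2;\mu_i)=d_1d_2\,\mathtt{det}\,A_i$, forces $\mathtt{det}\,A_i\neq 0$ for all $i$; hence $0$ is not an eigenvalue of $F_{\mathbf{u}}(d_1,d_2;\tilde{\mathbf{u}})$ on any $\mathbf{X}_i$, and $F_{\mathbf{u}}(d_1,d_2;\tilde{\mathbf{u}})$ is invertible. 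The Leray--Schauder Theorem (\cite[Theorem 2.8.1]{N}) then legitimately applies and gives $\mathrm{index}(F(d_1,d_2;\cdot),\tilde{\mathbf{u}})=(-1)^\gamma$, where $\gamma$ is the total number, counted with algebraic multiplicity, of negative eigenvalues of $F_{\mathbf{u}}(d_1,d_2;\tilde{\mathbf{u}})$.

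Next I would localize this count. Since $F_{\mathbf{u}}(d_1,d_2;\tilde{\mathbf{u}})$ leaves each $\mathbf{X}_i$ invariant, $\gamma=\sum_{i\ge 0}\gamma_i$, where $\gamma_i$ is the contribution from $\mathbf{X}_i$. By the correspondence recalled just before the lemma, $\beta$ is an eigenvalue of $F_{\mathbf{u}}$ on $\mathbf{X}_i$ precisely when $\beta(1+\mu_i)$ is an eigenvalue of the $2\times 2$ matrix $A_i$; as $1+\mu_i>0$, the sign of $\beta$ matches that of the corresponding eigenvalue of $A_i$, and each eigenvalue of $A_i$ is repeated $m(\mu_i)$ times on $\mathbf{X}_i$. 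Consequently $\gamma_i=m(\mu_i)\cdot\#\{\text{negative eigenvalues of }A_i\}$.

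The core of the argument is the sign bookkeeping for $A_i$. Because $\mathtt{det}\,A_i$ has the same sign as $H(d_1,d_2;\mu_i)$, I split into two cases. If $H(d_1,d_2;\mu_i)>0$, then $\mathtt{det}\,A_i>0$ and the two eigenvalues of $A_i$ share a sign, so $A_i$ has either $0$ or $2$ negative eigenvalues; in both cases $\gamma_i$ is even and contributes trivially to $(-1)^\gamma$. If $H(d_1,d_2;\mu_i)<0$, then $\mathtt{det}\,A_i<0$, so $A_i$ has exactly one negative eigenvalue and $\gamma_i=m(\mu_i)$. It then remains to identify the indices for which $H(d_1,d_2;\mu_i)<0$: since $\mu\mapsto H(d_1,d_2;\mu)$ is an upward-opening parabola (leading coefficient $d_1d_2>0$) with roots $\mu_{\pm}(d_1,d_2)$, one has $H(d_1,d_2;\mu_i)<0$ exactly when $\mu_{-}<\mu_i<\mu_{+}$, that is, when $\mu_i\in\Lambda(d_1,d_2)\cap S_p$. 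Collecting contributions gives $\gamma\equiv\sum_{\mu_i\in\Lambda\cap S_p}m(\mu_i)\ (\mathrm{mod}\ 2)$, hence $(-1)^\gamma=(-1)^{\sum_{\mu_i\in\Lambda\cap S_p}m(\mu_i)}$, which is the asserted formula; when $\Lambda\cap S_p=\emptyset$ (in particular when $H(d_1,d_2;\mu)>0$ for all $\mu\ge 0$) every block contributes evenly and $\gamma=0$.

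I expect no serious obstacle beyond careful multiplicity and parity bookkeeping. The one point genuinely demanding attention is that the blocks with $\mathtt{det}\,A_i>0$ may carry two negative eigenvalues, so the conclusion must rest on parity---that even contributions drop out of $(-1)^\gamma$---rather than on an exact total count; equivalently, the lemma's $\gamma$ agrees with the true negative-eigenvalue count only modulo $2$, which suffices. One must also keep invertibility of $F_{\mathbf{u}}(d_1,d_2;\tilde{\mathbf{u}})$ in force throughout so that the Leray--Schauder formula remains applicable at every step.
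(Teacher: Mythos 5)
The paper does not prove this lemma at all --- it is quoted verbatim as Theorem 6.1.1 of \cite{W} --- so there is no in-paper argument to compare against; your proposal supplies the standard proof underlying that cited result, and it is correct. The three ingredients you use (invertibility of $F_{\mathbf{u}}(d_1,d_2;\tilde{\mathbf{u}})$ from $\mathtt{det}A_i\neq 0$, the sign-preserving correspondence $\beta\mapsto\beta(1+\mu_i)$ between eigenvalues on $\mathbf{X}_i$ and eigenvalues of $A_i$ repeated $m(\mu_i)$ times, and the parity bookkeeping in which blocks with $\mathtt{det}A_i>0$ contribute an even number of negative eigenvalues --- zero, two, or a complex pair --- while blocks with $\mathtt{det}A_i<0$, i.e.\ $\mu_i\in\Lambda\cap S_p$, contribute exactly $m(\mu_i)$) are precisely the content of the reference. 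Your closing observation that the lemma's $\gamma$ agrees with the true negative-eigenvalue count only modulo $2$, which is all that $(-1)^\gamma$ requires, is the one genuinely delicate point and you have it right.
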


From the Lemma \ref{lem4.3}, we need to calculate the index of $F(d_1,d_2;.)$ at $\tilde{\mathbf{u}}$. The crucial step is to determine the range of $\mu$ for which $H_i(d_1,d_2;\mu)<0$.  By using the same method as in \cite{MLW,PSW1,PSW}, we get the following theorem for the non-constant steady state solutions.

\begin{thm} \label{nonconstant}
	Assume that $M/d_1\in(\mu_j,\mu_{j+1})$, for some $j\geq 1$, and $\gamma_{j}=\sum_{i=1}^j m(\mu_i)$ is odd, then there exists a positive constant $d_{2}^*$, such that for any $d_2\geq d_{2}^*$, the problem of \eqref{2.3} has at least one nonconstant positive solution.
\end{thm}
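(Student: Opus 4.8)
The plan is to establish the statement through a Leray--Schauder degree computation for the operator $F(d_1,d_2;\cdot)$ of \eqref{4.12}, exploiting homotopy invariance of the degree together with the index formula of Lemma \ref{lem4.3}. Throughout, the reaction parameters are frozen, so the positive equilibrium $\tilde{\mathbf{u}}$ and the constants $M,N,P,Q$ (hence the matrices $A_i$) do not depend on $d_1,d_2$. First I would fix a bounded open set $\mathbf{D}\subset\mathbf{X}^{+}$, concretely the set cut out by the pointwise bounds $\tfrac{\underline{\mu}}{2}<u<\chi+1$ and $\delta<v<\tfrac{c(r+m)\chi}{m}+1$ for a suitable small $\delta>0$, intersected with a large ball of $\mathbf{X}$. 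Using Theorem \ref{thm4.1}, Corollary \ref{lowerbound} and elliptic regularity, which bound every positive solution uniformly in $C^{1}$, one checks that $\mathbf{D}$ contains $\tilde{\mathbf{u}}$ and all positive solutions of \eqref{2.3} for every diffusion pair used below, and that $F(d_1,d_2;\cdot)\neq 0$ on $\partial\mathbf{D}$; hence the degree $\deg(F(d_1,d_2;\cdot),\mathbf{D},0)$ is well defined.

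Next I would compute the fixed point index of $F$ at $\tilde{\mathbf{u}}$ for the given coefficients with $d_2$ large. The hypothesis $M/d_1\in(\mu_j,\mu_{j+1})$ forces $M>0$, so the limits in \eqref{4.14} give $\mu_{+}(d_1,d_2)\to M/d_1$ and $\mu_{-}(d_1,d_2)\to 0$ as $d_2\to\infty$ with $d_1$ fixed, while condition \eqref{4.13} holds for all large $d_2$. Assuming for definiteness $PN-MQ>0$, so that $\mu_{-}>0$ (the other sign is treated below), there is $d_2^{*}$ such that for every $d_2\ge d_2^{*}$ the interval $\Lambda(d_1,d_2)=(\mu_{-},\mu_{+})$ satisfies $0<\mu_{-}<\mu_1$ and $\mu_j<\mu_{+}<\mu_{j+1}$; consequently $\Lambda(d_1,d_2)\cap S_p=\{\mu_1,\dots,\mu_j\}$ and $H(d_1,d_2;\mu_i)\neq 0$ for all $\mu_i\in S_p$. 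Lemma \ref{lem4.3} then yields
\[
\mathrm{index}\big(F(d_1,d_2;\cdot),\tilde{\mathbf{u}}\big)=(-1)^{\gamma_j}=-1,
\]
because $\gamma_j=\sum_{i=1}^{j}m(\mu_i)$ is odd.

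To pin down the degree over $\mathbf{D}$, I would homotope the coefficients along a path $t\mapsto(d_1(t),d_2)$, $t\in[0,1]$, that raises $d_1$ from its given value to a reference value $\hat d_1$ so large that $M/\hat d_1<\mu_1$ and $d^{*}\le\hat d_1\le d_2$, where $d^{*}$ is the constant of Theorem \ref{thm3}. The path stays in the region $\min(d_1,\hat d_1)\le d_1(t)\le d_2$, so the a priori bounds hold uniformly in $t$ and $F(d_1(t),d_2;\cdot)\neq 0$ on $\partial\mathbf{D}$. By homotopy invariance and Theorem \ref{thm3}, which leaves $\tilde{\mathbf{u}}$ as the only zero in $\mathbf{D}$ at the reference pair,
\[
\deg\big(F(d_1,d_2;\cdot),\mathbf{D},0\big)=\deg\big(F(\hat d_1,d_2;\cdot),\mathbf{D},0\big)=\mathrm{index}\big(F(\hat d_1,d_2;\cdot),\tilde{\mathbf{u}}\big).
\]
Since $M/\hat d_1<\mu_1$ forces $\mu_{+}(\hat d_1,d_2)<\mu_1$, the set $\Lambda(\hat d_1,d_2)$ contains none of $\mu_1,\dots,\mu_j$, so by Lemma \ref{lem4.3} this last index equals $+1$, of parity opposite to the one computed at the target. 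If \eqref{2.3} had no nonconstant positive solution at $(d_1,d_2)$, then $\tilde{\mathbf{u}}$ would be the unique zero of $F(d_1,d_2;\cdot)$ in $\mathbf{D}$, forcing $\deg(F(d_1,d_2;\cdot),\mathbf{D},0)=-1$ and contradicting the displayed chain. Therefore a nonconstant positive solution exists for every $d_2\ge d_2^{*}$.

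The step I expect to be the main obstacle is the construction of $\mathbf{D}$ and the uniform exclusion of boundary zeros along the entire homotopy. The upper bounds and the lower bound on $u$ come from Theorem \ref{thm4.1} and Corollary \ref{lowerbound}, but they must be made uniform in $t$ by running the Harnack step with $\min_t d_1(t)>0$ in place of $d^{*}$. More delicate is the uniform positive lower bound $\delta$ on $v$, needed to separate $\mathbf{D}$ from the semi-trivial solution $(\chi,0)$ of \eqref{2.3}: I would secure it from a Harnack inequality for $v$ together with a contradiction argument ruling out a sequence of positive solutions whose predator component collapses to zero. A final bookkeeping point concerns $\mu_0=0$: depending on the sign of $PN-MQ$ it may lie in $\Lambda$, but it does so simultaneously at the target and at the reference, so its contribution cancels in the parity comparison, and the conclusion that the two indices differ, and hence that a nonconstant solution exists, is unaffected.
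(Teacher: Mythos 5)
Your argument is correct and follows essentially the same route as the paper: a Leray--Schauder degree computation that compares the index $(-1)^{\gamma_j}=-1$ of $F$ at $\tilde{\mathbf{u}}$ for the given $(d_1,d_2)$ with the index $+1$ at a large-diffusion reference pair where Theorem \ref{thm3} excludes nonconstant solutions, the two being linked by homotopy invariance and the a priori bounds. The only cosmetic difference is that you deform $d_1$ alone toward a reference value $\hat d_1$ while the paper deforms $(d_1,d_2)$ linearly to $(d_1^*,d_2^*)$; your explicit treatment of the uniform lower bound on $v$ along the homotopy addresses a point the paper glosses over when it asserts the bounds $\underline{C}\leq u,v\leq\overline{C}$.
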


\begin{proof}
	Since $M>0$, we can find a $
	\hat{d}_2\gg 1$ such that \eqref{4.13} holds and $0<\mu_{-}(d_1,d_2)<\mu_{+}(d_1,d_2)$.
	
	According to $M/d_1\in (\mu_j,\mu_{j+1})$ and \eqref{4.14}, there exist $d_0>\hat{d}_2$ such that
	\be\label{4.15}
	\mu_{+}(d_1,d_2)\in(\mu_{j},\mu_{j+1}),\quad  0<\mu_{-}(d_1, d_2)<\mu_1,\quad\forall\ d_2\geq d_0.
	\ee
	By Theorem \ref{thm3}, there exists $d_{1}^*>d_0$, such that \eqref{2.3} has no nonconstant positive solution for $d_1=d_{1}^*$ and $d_2\geq d_{1}^*$.  Moreover, we can choose $d_{1}^*$ large enough that $M/d_{1}^*<\mu_1$.  Applying \eqref{4.14} once again, there exists a constant $d_{2}^*>d_{1}^*$, such that
	\be\label{4.16}
	0<\mu_{-}(d_{1}^*,d_{2}^*)<\mu_{+}(d_{1}^*,d_{2}^*)<\mu_1, \quad\forall\ d_2\geq d_{2}^*.	
	\ee
	
	We shall prove that for any $d_2\geq d_{2}^*$, \eqref{2.3} has at least one nonconstant positive solution.  On the contrary, suppose that \eqref{2.3} has no nonconstant positive solution for some $d_2\geq d_{2}^*$.  In the following, we will derive a contradiction by using a homotopy argument.
	
	For these fixed $d_{1}^*,d_{2}^*,d_1$ and $d_2$, we define
	\bess
	D(t) =
	\begin{pmatrix}
		td_1+(1-t)d_{1}^*&\quad 0\\[.5mm]
		0 &\quad td_2+(1-t)d_{2}^*
	\end{pmatrix},\quad 0\leq t\leq 1,
	\eess
	and consider the problem
	\be\label{4.17}
	\begin{cases}
		-\lap\textbf{u}= D^{-1}(t)G(\textbf{u}),&x\in\Omega,\\[.1mm]
		\dd\frac{\pt\textbf{u}}{\pt\nu}=0,&x\in\pt\Omega.
	\end{cases}
	\ee
	Note that $\textbf{u}$ is a nonconstant positive solution of \eqref{2.3} if and only if it is a solution of \eqref{4.17} for $t=1$.  It is obvious that $\tilde{\textbf{u}}$ is the unique positive constant solution of \eqref{4.17}. And $\textbf{u}$ is a nonconstant positive solution of \eqref{4.17} if and only if it is a solution of the problem,
	\be
	\Psi(\textbf{u};t)=\textbf{u}-(I-\lap)^{-1}\{D^{-1}(t)G(\textbf{u})+ \textbf{u}\}=0\qquad\mbox{on}\ \mathbf{X},
	\ee
	where $\mathbf{X}$ is defined by \eqref{4.3}.  It is obvious that
	\[\Psi(\textbf{u};1)=F(d_1,d_2;\textbf{u}),\qquad\Psi (\textbf{u};0)=F(d_{1}^*,d_{2}^*;\textbf{u}),\]
	and
	\bess
	D_{\textbf{u}}F(d_1,d_2;\tilde{\textbf{u}})=I-(I-\lap)^{-1}\{D^{-1} G_{\textbf{u}}(\tilde{\textbf{u}})+I\}=0,\\[1mm]
	D_{\textbf{u}}F(d_{1}^*,d_{2}^*;\tilde{\textbf{u}})=I-(I-\lap)^{-1}\{\tilde{D}^{-1}G_{\textbf{u}}(\tilde{\textbf{u}})+I\}=0,
	\eess
	where $F(d_1,d_2;\tilde{\textbf{u}})$ is defined by \eqref{4.12} and $D=\mbox{diag}(d_1,d_2),\tilde{D}=\mbox{diag}(d_{1}^*,d_{2}^*)$.  The above arguments show that both $\Psi(\textbf{u};1)=0$ and $\Psi(\textbf{u};0)=0$ have no nonconstant positive solutions.
	
	By \eqref{4.15} and \eqref{4.16}, it follows that
	\[\Lambda (d_1,d_2)\cap S_p=\{\mu_1,\mu_2,\cdots,\mu_j\},\quad\Lambda (d_{1}^*,d_{2}^*)\cap S_p=\emptyset.\]
	Since $\gamma_j$ is odd, from Lemma \ref{lem4.3},
	\be\label{4.19}
	\begin{cases}
		\mbox{index}(\Psi(\cdot\ ;1),\tilde{\textbf{u}})=\mbox{index}(F(d_1, d_2; \cdot\ ), \tilde{\textbf{u}})=(-1)^{\gamma_j}=-1,\\[1mm]
		\mbox{index}(\Psi(\cdot\ ;0),\tilde{\textbf{u}})=\mbox{index}(F(d_1, d_2; \cdot\ ), \tilde{\textbf{u}})=(-1)^{0}=1.
	\end{cases}
	\ee
	By using Theorem \ref{thm4.1}, there exists a positive constant $\underline{C}$ and $\overline{C}$ such that, for all $0\leq t\leq 1$, the positive solution $(u,v)$ of \eqref{4.17} satisfies $\underline{C}\leq u(x),v(x)\leq\overline{C}$ on $\overline\Omega $. Set
	\[\Sigma=\{\textbf{u}\in\mathbf{X}:\, \underline{C}<u(x),v(x)<\overline{C}\ \mbox{on}\ \overline\Omega \}.\]
	Furthermore, $\Psi(\textbf{u};t)\neq 0$ for all $\textbf{u}\in\pt\Sigma$ and $0\leq t\leq 1$.  By the homotopy invariance of the Leray-Schauder degree \cite{N},
	\be\label{4.20}
	\mbox{deg}(\Psi(\cdot\ ;0),\Sigma,0)=\mbox{deg}(\Psi(\cdot\ ;1), \Sigma,0).
	\ee
	Since both equations $\Psi(\textbf{u};0)=0$ and $\Psi(\textbf{u};1)=0$ have the unique positive solution $\tilde{\textbf{u}}$ in $\Sigma$, by \eqref{4.19}, we obtain
	\bess
	\mbox{deg}(\Psi(\cdot\ ;0),\Sigma,0)=\mbox{index}(\Psi(\cdot\ ; 0),\tilde{\textbf{u}})=1,\\[1mm]
	\mbox{deg}(\Psi(\cdot\ ;1),\Sigma,0)=\mbox{index}(\Psi(\cdot\ ;1),\tilde{\textbf{u}})=-1,	
	\eess
	this contradicts \eqref{4.20} and the proof is complete.
\end{proof}
\section{Conclusions}
Fear is a psychological feature of most animals. In ecology, almost all prey populations are concerned about potential predator attacks. Apart from direct consumption, predator populations often cause fear in prey populations and promote a variety of avoidance mechanisms (morphological and/ or behavioral). Sometimes it affects the life history of prey populations and dramatically reduces reproductive rates.  In this paper, we introduced a diffusive prey-predator model that incorporates the effect that fear of predators has on prey with the Beddington-DeAngelis functional response under homogeneous Neumann boundary conditions. Biologically, the model highlights how both direct predation and fear-induced behavioural changes can significantly influence prey population dynamics and spatial patterns. Firstly, we investigated \eqref{2.2} has a unique positive constant equilibrium solution under the conditions of $cp>mq, r>d+a\lambda$, and discuss the global existence, uniqueness, and estimates of the solution of \eqref{2.2}.  Secondly, we give a priori estimates to discuss the nonexistence and existence of nonconstant positive solutions of \eqref{2.3} by using the maximum principle. And we show the nonexistence of nonnegative steady-state solutions when the diffusion coefficients $d_1$ and $d_2$ are large (Theorem \ref{thm4.1}). We also investigate the nonconstant positive steady state to \eqref{2.3} under the homogeneous Neumann boundary condition. By applying the Leray–Schauder degree theory, we provide sufficient conditions for the existence of a nonconstant positive solution. In addition, we give some numerical simulations to verify Theorem \ref{nonconstant} and complement our theoretical analysis results. The following Figure \ref{fig1} and Figure \ref{fig2} show the time evolution of prey and predator densities under varying diffusion and interaction strengths. For each case, the system is initialized near a spatially homogeneous steady state. Pattern formation emerges due to bifurcation driven by diffusion imbalance and nonlinear predator-prey interactions. The sharp features in predator density in Figure \ref{fig2} reflect localized, nonlinear predator response under low diffusion. 

\begin{figure}[H]
\centering
\includegraphics[scale=0.35]{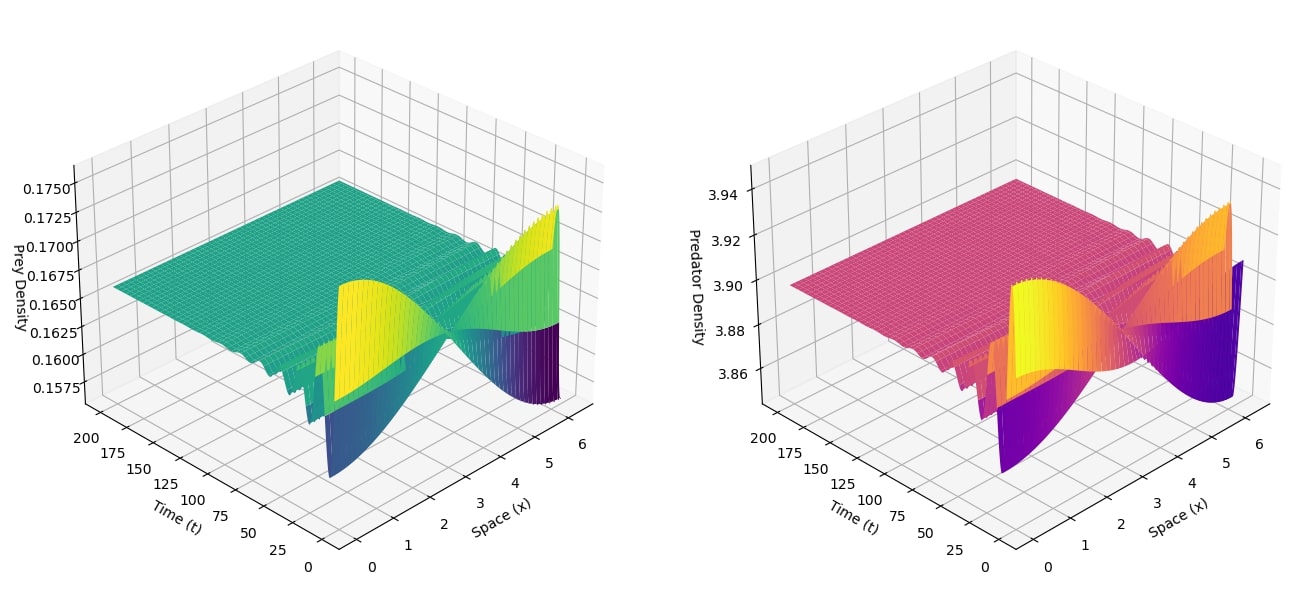}
\caption{Nonconstant positive solution $(u(x,t),v(x,t))$ of the system \eqref{2.3}, $t=200, d_{2}=0.1, a=0.1, \tilde{u}=0.16608, \tilde{v}=3.89934,$ initial data $(u_{0},v_{0})=(\tilde{u}+0.01\cos \frac{x}{2},  \tilde{v}+0.01\cos x,)$ $\frac{M}{d_{1}}=2.407\in (\mu_{1},\mu_{2})=(1,4)$.}
\label{fig1}
\end{figure}

\begin{figure}[H]
\centering
\includegraphics[scale=0.35]{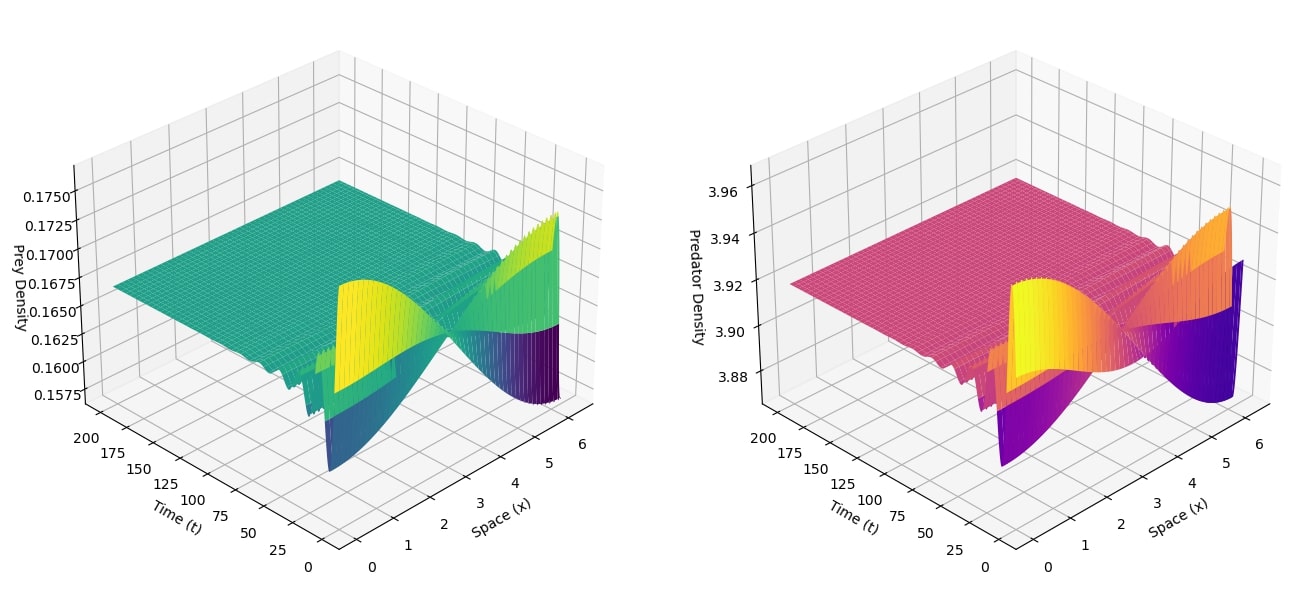}
\caption{Nonconstant positive solution $(u(x,t),v(x,t))$ of the system \eqref{2.3}, $t=200, d_{2}=0.2, a=0.055, \tilde{u}=0.16675, \tilde{v}=3.91904,$ initial data $(u_{0},v_{0})=(\tilde{u}+0.01\cos \frac{x}{2},  \tilde{v}+0.01\cos x,)$ $\frac{M}{d_{1}}=10.095\in (9,16)$.}
\label{fig2}
\end{figure}

Note that the corresponding bifurcation $\gamma_{j}$ is odd for the eigenvalue intervals, ensuring the bifurcation leads to the nonconstant positive solution shown. 

\begin{figure}[H]
\centering
\includegraphics[scale=0.25]{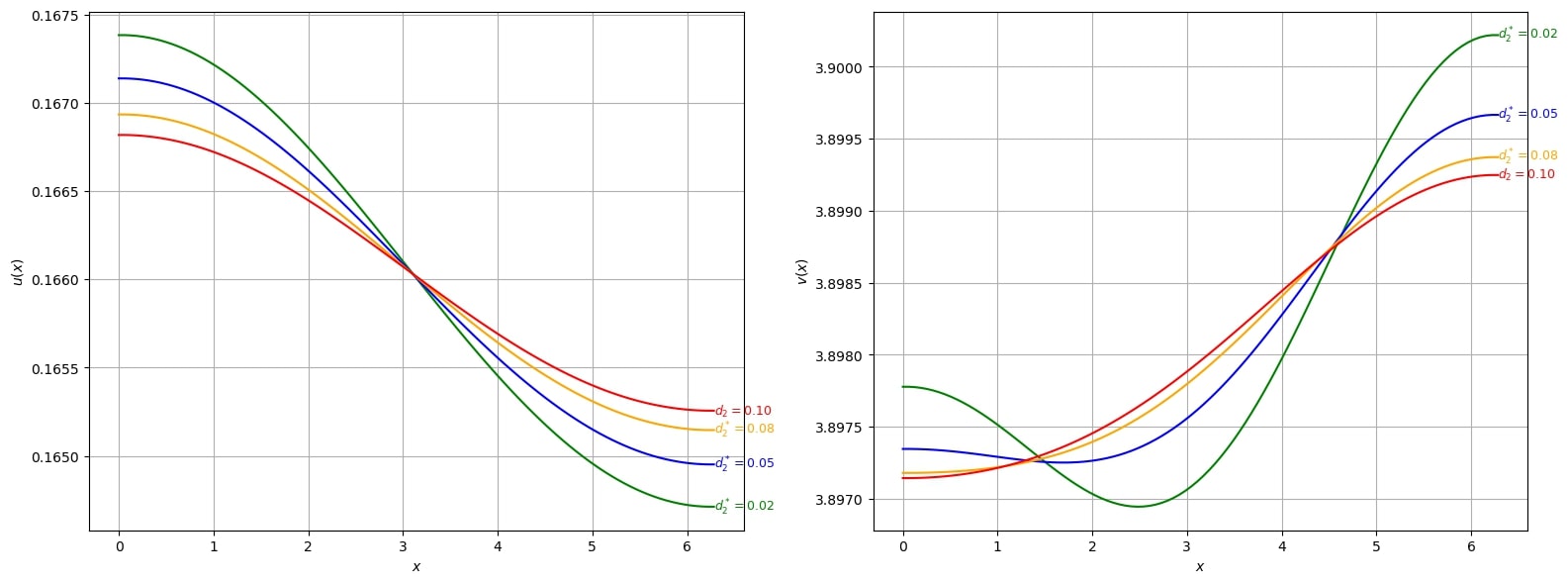}
\caption{Nonconstant positive solution $(u(x,t),v(x,t))$ of the system \eqref{2.3}, $ a=0.1, \tilde{u}=0.16608, \tilde{v}=3.89934,$ initial data $(u_{0},v_{0})=(\tilde{u}+0.01\cos \frac{x}{2},  \tilde{v}+0.01\cos x), t=50, d_{2}=0.02, 0.05, 0.08$ and $0.1$.}
\label{fig3}
\end{figure}

\begin{figure}[H]
\centering
\includegraphics[scale=0.29]{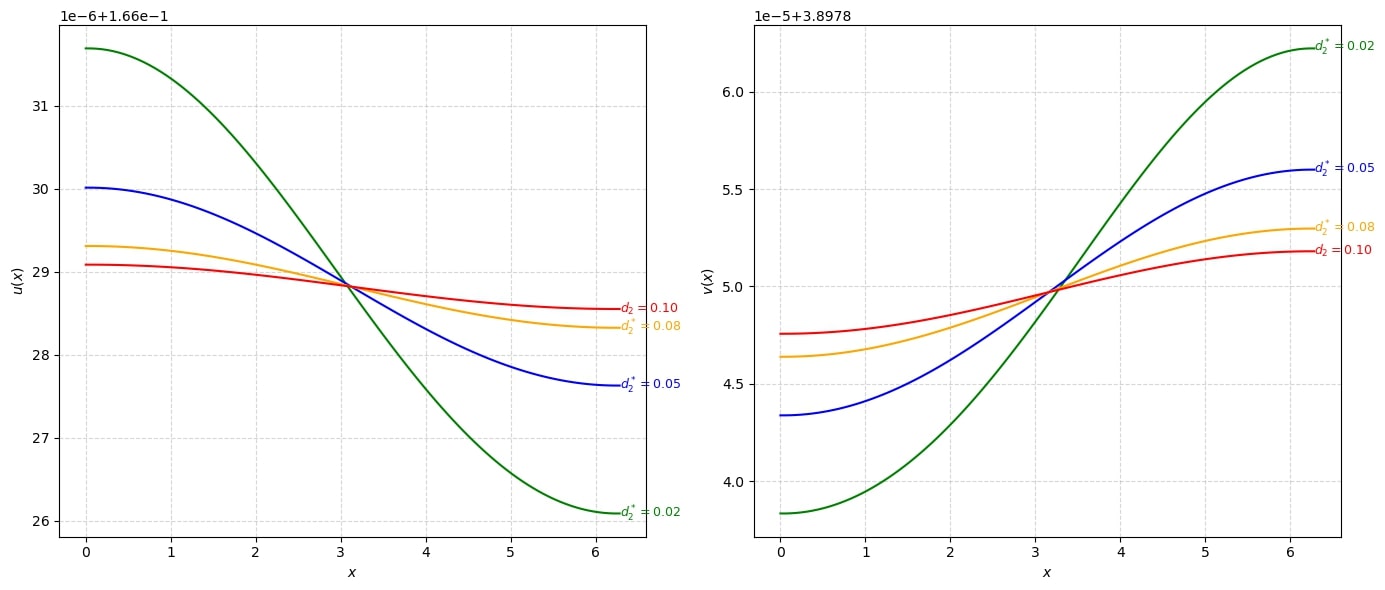}
\caption{Nonconstant positive solution $(u(x,t),v(x,t))$ of the system \eqref{2.3}, $a=0.1, \tilde{u}=0.16608, \tilde{v}=3.89934,$ initial data $(u_{0},v_{0})=(\tilde{u}+0.01\cos \frac{x}{2},  \tilde{v}+0.01\cos x), t=200, d_{2}=0.02, 0.05, 0.08$ and $0.1$. }
\label{fig4}
\end{figure}

\begin{figure}[H]
\centering
\includegraphics[scale=0.25]{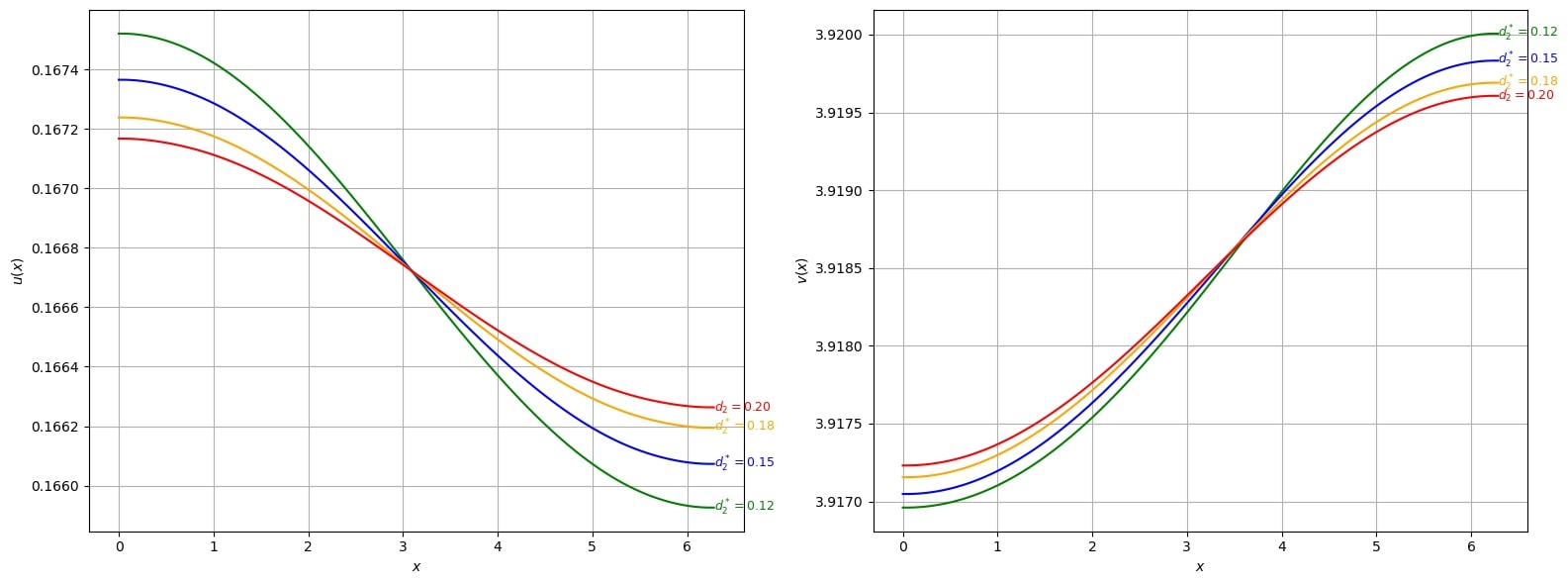}
\caption{ Nonconstant positive solution $(u(x,t),v(x,t))$ of the system \eqref{2.3}, $a=0.055, \tilde{u}=0.16675, \tilde{v}=3.91904,$ initial data $(u_{0},v_{0})=(\tilde{u}+0.01\mathrm{cos}\frac{x}{2},  \tilde{v}+0.01\mathrm{cos} x),t=50, d_{2}=0.12, 0.15, 0.18$ and $0.2$.}
\label{fig5}
\end{figure}

\begin{figure}[H]
\centering
\includegraphics[scale=0.27]{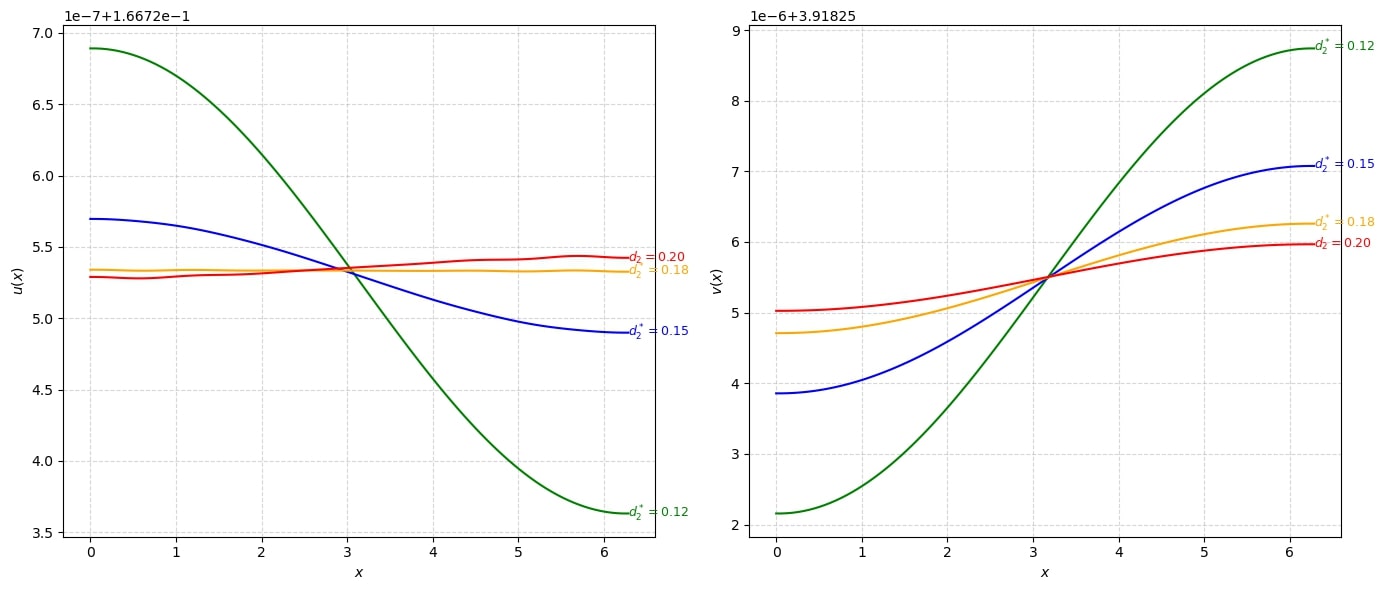}
\caption{Nonconstant positive solution $(u(x,t),v(x,t))$ of the system \eqref{2.3}, $ a=0.055, \tilde{u}=0.16675, \tilde{v}=3.91904,$ initial data $(u_{0},v_{0})=(\tilde{u}+0.01\cos \frac{x}{2},  \tilde{v}+0.01\cos x), t=200, d_{2}=0.12, 0.15, 0.18$ and $0.2$}
\label{fig6}
\end{figure}

The Figure \ref{fig3} - Figure \ref{fig6} illustrate the long-time behavior of the predator–prey reaction-diffusion system under different values of the predator diffusion rate $d_{2},$ with prey diffusion $d_{1}$ fixed. 

\section*{Acknowledgment} 
This work was supported by SIIT Young Researcher Grant, under a contract
number SIIT 2022-YRG-AS01. A crucial part of the research was carried out while A.Z.M.  was visiting Sirindhorn International Institute of Technology (SIIT) at Thammasat University, and its hospitality is gratefully appreciated.  A.C.M. and T.T.S. gratefully acknowledge financial support from the Excellent Foreign Student (EFS) scholarship,  Sirindhorn International Institute of Technology (SIIT),  Thammasat University. M.H.L. was supported by the Faculty-Quota Scholarship, Sirindhorn International Institute of Technology (SIIT),  Thammasat University.


\end{document}